\def\tp{\ensuremath{\hdots}}
\def\infini{\ensuremath{\infty}}
\def\fc{\ensuremath{\mathds 1}}
\def\Bc{\ensuremath{\mathcal B}}
\def\Sc{\ensuremath{\mathcal S}}
\def\E{\ensuremath{\mathbb E}}
\def\N{\ensuremath{\mathbb N}}
\def\P{\ensuremath{\mathbb P}}
\def\R{\ensuremath{\mathbb R}}
\def\Z{\ensuremath{\mathbb Z}}
\definecolor{cqcqcq}{rgb}{0.7529411764705882,0.7529411764705882,0.7529411764705882}
\definecolor{ffqqqq}{rgb}{1.,0.,0.}
\definecolor{ffqqtt}{rgb}{1.,0.,0.2}
\definecolor{qqwuqq}{rgb}{0.,0.39215686274509803,0.}
\definecolor{qqqqff}{rgb}{0.,0.,1.}
\definecolor{uuuuuu}{rgb}{0.26666666666666666,0.26666666666666666,0.26666666666666666}
\definecolor{xdxdff}{rgb}{0.49019607843137253,0.49019607843137253,1.}
\definecolor{uququq}{rgb}{0.25,0.25,0.25}
\def\aM{\text{argmax}}
\def\am{\text{argmin}}
\newtheorem{theo}{Theorem}[section]
\newtheorem{Theorem}[theo]{Theorem}
\newtheorem{Proposition}[theo]{Proposition}
\newtheorem{Corollary}[theo]{Corollary}
\newtheorem{Remark}[theo]{Remark}
\newtheorem{Definition}[theo]{Definition}
\newtheorem{Lemma}[theo]{Lemma}
\newtheorem{Example}[theo]{Example}
\newcommand{\psh}[2]{\ensuremath{\left\langle #1,#2\right\rangle}}
\title{Inconsistency of Template Estimation by Minimizing of the Variance/Pre-Variance in the Quotient Space $^\dagger$}
\author{
Loïc Devilliers\footnote{Université C\^ote d’Azur, Inria, France, \url{loic.devilliers@inria.fr}},
Stéphanie Allassonnière\footnote{CMAP, Ecole polytechnique, CNRS, Université Paris-Saclay, 91128, Palaiseau, France},
Alain Trouvé\footnote{CMLA, ENS Cachan, CNRS, Université Paris-Saclay, 94235 Cachan, France},\\
and Xavier Pennec\footnote{Université C\^ote d’Azur, Inria, France}}
\begin{document}
\maketitle

\abstract{
We tackle the problem of template estimation when data have been randomly deformed under a group action in the presence of noise. In order to estimate the template, one often minimizes the variance when the influence of the transformations have been removed (computation of the Fréchet mean in the quotient space). 
The consistency bias is defined as the distance (possibly zero) between the orbit of the template and the orbit of one element which minimizes the variance. 
In the first part, we restrict ourselves to isometric group action, in this case the Hilbertian distance is invariant under the group action. We establish an asymptotic behavior of the consistency bias which is linear with respect to the noise level. As a result the inconsistency is unavoidable as soon as the noise is enough. In practice, template estimation with a finite sample is often done with an algorithm called "max-max". 
In the second part, also in the case of isometric group finite, we show the convergence of this algorithm to an empirical Karcher mean.
Our numerical experiments show that the bias observed in practice can not be attributed to the small sample size or to a convergence problem but is indeed due to the previously studied inconsistency.
In a third part, we also present some insights of the case of a non invariant distance with respect to the group action. We will see that the inconsistency still holds as soon as the noise level is large enough. Moreover we prove the inconsistency even when a regularization term is added.}
\newpage

\tableofcontents

\section{Introduction}

\subsection{General Introduction}
Template estimation is a well known issue in different fields such as statistics on signals~\cite{kur}, shape theory, computational anatomy~\cite{gui,jos,coo} etc. In these fields, the template (which can be viewed as the prototype of our data) can be (according to different vocabulary) shifted, transformed, wrapped or deformed due to different groups acting on data. Moreover, due to a limited precision in the measurement, the presence of noise is almost always unavoidable. These mixed effects on data lead us to study the consistency of algorithms which claim to compute the template. A popular algorithm consists in the minimization of the variance, in other words, the computation of the Fréchet mean in quotient space. This method has been already proved to be inconsistent~\cite{big,mio2,dev2}. 
In~\cite{big} the authors proves the inconsistency with a lower bound of the expectation of the error between the original template and the estimated template with a finite sample, they deduce that this expectation does not go to zero as the size of the sample goes to infinity. This work was done in a functional space, where functions only observed at a finite number of points of the functions were observed. In this case one can model these observable values on a grid. When the resolution of the grid goes to zero, one can show the consistency~\cite{pan} by using the Fréchet mean with the Wasserstein distance on the space of measures rather than in the space of functions. However, in (medical) images the number of pixels or voxels is~finite.

In~\cite{mio2}, the authors demonstrated the inconsistency in a finite dimensional manifold with Gaussian noise, when the noisel level tends to zero. In our previous work~\cite{dev2}, we focused our study on the inconsistency with Hilbert Space (including infinite dimensional case) as ambient space. This current paper is an extension of a conference paper~\cite{dev3}.

{\subsection{Why Using a Group Action? Comparison with the Standard Norm}}
{
In the following, we take a simple example which justifies the use of the group action in order to compare the shape of two functions:

On Figure \ref{fig:comp}, suppose that you want to compare these functions. The simplest way to compare $f_0$ with $f_1$ would be to compute the $L^2$-norm (or any other norm) of $f_0$ $-$ $f_1$, if we do that we have that $\|f_0-f_1\|\simeq 0.6$. Likewise $\|f_0-f_2\|\simeq 0.6$, therefore the norm tells us that $f_0$ is at the same distance from $f_1$ and from $f_2$. Yet, our eyes would say that $f_0$, $f_1$ have the same shape, contrarily to $f_0$ and $f_2$. Therefore the simple use of the $L^2$-norm in the space of functions is not enough. To have a relevant way to compare functions, one can register functions first. Firstly, we estimate the better time translation which aligns $f_0$ and $f_1$ and secondly, we compute the $L^2$-norm after this alignment step. On this example, we find that the distance is now $\simeq$0.02. On the contrarily, after alignment the distance between $f_0$ and $f_2$ is still $\simeq$0.6. With this new way of comparing functions, the functions $f_0$ looks like $f_1$ but do not look like $f_2$.
This fits with our intuition. That is why we use a group action in order to perform statistics. In the following paragraph, we precise how to do it in general.}

This idea of using deformations/transformation in order to compare things is not new. It was already proposed by Darcy Thompson~\cite{dar} in the beginning of the 20th century, in order to classify species.

\begin{figure}[h]
\centering
\includegraphics[clip=true, trim=2cm 7cm 2cm 7cm, width=0.5\textwidth]{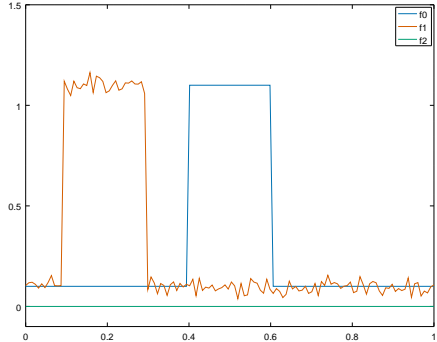}
\caption{Three functions defined on the interval $[0,1]$. The blue one ($f_0$) is a step function, the red one ($f_1$) is a translated version of the blue one when noise has been added, and the green one ($f_3$) is the null function.}
\label{fig:comp}
\end{figure}

\subsection{Settings and Notation}
In this paper, we suppose that observations belong to a Hilbert space $(M,\: \psh{\cdot}{\cdot})$, we denote by $\|\cdot\|$ the norm associated to the dot product $\psh{\cdot}{\cdot}$. We also consider a group of transformation $G$ which acts on $M$ the space of observations. This means that $g'\cdot (g\cdot x)=(g'g)\cdot x$ and $e\cdot x=x$ for all $x\in M$, $g,\: g'\in G$, where $e$ is the identity element of $G$. Note that in this article, $g\cdot x$ is the result of the action of $g$ on $x$, and $\cdot$ should not to be confused with the multiplication of real numbers noted $\times$

{\em The generative model}  
is the following: we transform an unknown template~$t_0\in M$ with $\Phi$ a random and unknown element of the group $G$ and we add some noise. Let $\sigma$ be a positive noise level and $\epsilon$ a standardized noise: $\E(\epsilon)=0$, $\E(\|\epsilon\|^2)=1$. Moreover we suppose that $\epsilon$ and $\Phi$ are independent random variables. Finally, the only observable random variable is: 
\begin{equation}Y=\Phi\cdot t_0+\sigma\epsilon\label{modgenforward}.\end{equation} 

This generative model is commonly used in Computational anatomy in diverse frameworks, for instance
with currents~\cite{dur,gla}, {varifolds~\cite{cha}, LDDMM on images~\cite{mil} but also in functional data analysis~\cite{kur}. 
All these works are applied in different spaces, for instance, the varifold builds an embedding of the surfaces into an Hilbert space, and a group of diffeomorphisms have the ability of deform these surfaces. 
Supposing a general group action on a space with the generative model~\eqref{modgenforward} allows us to embed all these various situations into one abstract model, and to study template estimation in this abstract model.
}

Example of noise: if we assume that the noise is independent and identically distributed on each pixel or voxel with a standard deviation $w$, then $\sigma=\sqrt{N}w$, where $N$ is the number of pixels/voxels. However, the noise which we consider can be more general: we do not require the fact that the noise is independent over each region of the space $M$.

Note that the inconsistency of Template estimation can be also studied with an alternative generative model, called backward model where $Y=\Phi\cdot (t_0+\sigma \epsilon)$~\cite{dev2}. Some authors also use the term \textit{{perturbation model}} see~\cite{huc,roh,goo}.  

{\em Quotient space:} the random transformation of the template by the group leads us to project the observation $Y$ into the quotient space. The quotient space is defined as the set containing all the orbit $[x]=\{g\cdot x,\: g\in G\}$ for $x\in M$. 
The set which is constituted of all orbits is call the quotient space $M$ by the group $G$ and is noted by:
\begin{equation*}
Q=M/G=\{[x],\: x\in M\}.
\end{equation*}

As we want to do statistics on this space, we aim to equip the quotient with a metric. One often requires that $d_M$ the distance in the ambient space is invariant under the group action $G$, this means that
\begin{equation*}
\forall m,n\in M,\: \forall g\in G\quad  d_M(g\cdot m,g\cdot n)=d_M(m,n).
\end{equation*}

If $d_M$ is invariant and if the orbits are closed sets (if the orbits are not closed sets, it is possible to have $d_Q([a],[b])=0$ even if $[a]\neq [b]$, in this case we call $d_Q$ a pseudo-distance. Nevertheless, this has no consequence in this paper if $d_Q$ is only a pseudo-distance), then 
\begin{equation*}
d_Q([x],[y])=\underset{g\in G}{\inf} d_M(x,g\cdot y),
\end{equation*}
is well defined, and $d_Q$ is a distance in the quotient space.
The quotient distance $d_Q([x],[y])$ is the distance between $x$ and $y'$ where $y'$ is the registration of $y$ with respect to $x$. We say in this case that $y'$ is in optimal position with respect to $x$.

One particular distance in the ambient space $M$, which we use in all this article, is the distance given by the norm of the Hilbert space: $d_M(a,b)=\|a-b\|$. 
Moreover we say that $G$ acts isometrically on $M$, if $x\mapsto g\cdot x$ is a linear map which leaves the norm unchanged. In this case $d_M$ the distance given by the norm of the Hilbert space is invariant under the group action.
The quotient (pseudo)-distance is, in this case (see \cref{dq2}), $d_Q([a],[b])=\underset{g\in G}{\inf} \|a-g\cdot b\|$.

\begin{figure}[h]
\centering
\begin{tikzpicture}[scale=1.1]
\draw (0,0) node {$\bullet$} node[below] {$0$};
\draw (0,1) node {$\bullet$} ;
\draw (-2,0) node {$\bullet$} ;
\draw (0,0) circle (1) ;
\draw (0,0) circle (2) ;
\draw (0,1) node[above]{$p=(0,1)$} ;
\draw (-2,0) node[left]{$q=(-2,0)$} ;
\draw[red,thick,<->] (1,0)--(2,0);
\draw[red] (2,0) node[right] {$d_Q([p],[q])=1$};
\end{tikzpicture}
\caption[Due to the invariant action, the orbits are parallel]{Due to the invariant action, the orbits are parallel. Here the orbits are circles centred at $0$. This~is the case when the group $G$ is the group of rotations.}
\label{dq2} 
\end{figure}
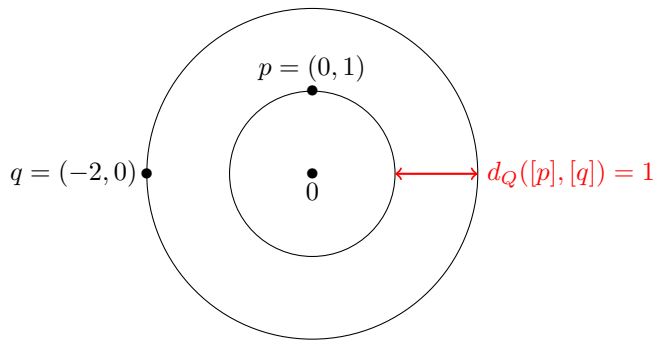

\begin{Remark}
When $G$ acts isometrically on $M$ a Hilbert space, by expansion of the squared norm we have:
\begin{equation*}
d_Q([a],[b])^2=\|a\|^2-2\underset{g\in G}{\sup} \psh{a}{g\cdot b}+\|b\|^2
\end{equation*}

Thus, even if the quotient space is not a linear space, we have a ``polarization identity'' in the quotient space:
\begin{equation}
\underset{g\in G}{\sup} \psh{a}{g\cdot b}=\frac12\left(\|a\|^2+\|b\|^2-d^2_Q([a],[b]\right)=\frac12\left(d_Q^2([a],[0])+d_Q^2([b],[0])-d_Q^2([a],[b]\right)
\label{polari}
\end{equation}
\end{Remark}

When the distance given by the norm is invariant under the group action, we define the variance of the random orbit~$[Y]$ as the expectation of the (pseudo)-distance between the random orbit~$[Y]$ and the orbit of a point~$x$ in $M$:
\begin{equation*}
F(x)=\E(d_Q^2([x],[Y]))=\E(\inf_{g\in G}\|g\cdot x-Y\|^2)=\E(\inf_{g\in G}\|x-g\cdot Y\|^2).
\end{equation*}

Note that $F(x)$ is well defined for all $x\in M$ because $\E(\|Y\|^2)$ is finite. 
Moreover, since \mbox{$F(g\cdot x)=F(x)$}, for all $x\in M$ and $g\in G$, the variance $F$ is well defined in the quotient space: $[x]\mapsto F(x)$ does have a sense.

{
Moreover, in presence of a sample of the observable variable $Y$ noted $Y_1,\tp, Y_n$, one can define the empirical variance of a point~$x$ in $M$ : 
\begin{equation*}
F_n(x)=\sum_{k=1}^{n} (\inf_{g\in G}\|g\cdot x-Y_i\|^2)=\sum_{k=1}^n(\inf_{g\in G}\|x-g\cdot Y_i\|^2).
\end{equation*}
\begin{Definition}
Template estimation is performed by minimizing $F_n$ :
\begin{equation*}
\widehat{t_0}_n=\underset{x\in M}{\am} \ F_n,
\end{equation*}
\end{Definition}
In order to study this estimation method, one can look the limit of this estimator when the number of data $n$ tends to $+\infini$, in this case, the estimation becomes:
\begin{equation*}
\widehat{t_0}_{\infini} =\underset{x\in M}{\am}\ F
\end{equation*}
}

If $m_\star\in H$ minimizes $F$, then $[m_\star]$ is called a Fréchet mean of $[Y]$.
\begin{Definition} {We say that the estimation is consistent if $t_0$ minimizes $F$.}
Moreover the consistency bias, noted $CB$, is the (pseudo)-distance between the orbit of the template $[t_0]$ and $[m_\star]$: $CB=d_Q([t_0],[m_\star])$. If such a $m_\star$ does not exist, then the consistency bias is infinite.
\end{Definition}
Note that, if the action is not isometric and is not either invariant, a priori $d_Q$ is no longer a (pseudo)-distance in the quotient space (this point is discussed in Section \ref{sec:noni}). However one can still define $F$ and wonder if the minimization of $F$ is a consistent estimator of $t_0$. In this case, we call $F$ a~pre-variance.

\subsection{Questions and Contributions}
This setting leads us to wonder about few things listed below:\\

{\em Questions}: 
\begin{itemize}
\item Is $t_0$ a minimum of the variance or the pre-variance?
\item What is the behavior of the consistency bias with respect to the noise level?
\item How to perform such a minimization of the variance? Indeed, in practice we have only a sample and not the whole distribution.
\end{itemize}

{\em Contribution}: In the case of an isometric action, we provide a Taylor expansion of the consistency bias when the noise level $\sigma$ tends to infinity. As we do not have the whole distribution, we minimize the empirical variance given a sample. An element which minimizes this empirical variance is called an empirical Fréchet mean. We already know that the empirical Fréchet mean converges to the Fréchet mean when the sample size tends to infinity~\cite{zie}. Therefore our problem is reduced to finding an empirical Fréchet mean with a finite but sufficiently large sample. One algorithm called the ``max-max'' algorithm~\cite{all} aims to compute such an empirical Fréchet mean.
We establish some properties of the convergence of this algorithm. In particular, when the group is finite, the algorithm converges in a finite number of steps to an empirical Karcher mean (a local minimum of the empirical variance given a sample). This helps us to illustrate the inconsistency in this very simple framework. 

We would like to insist on this point: the noise is created in the ambient space with our generative model and the computation of the Fréchet mean is done in the quotient space, this interaction induces an inconsistency. On the opposite, if one models the noise directly in the quotient space and compute the Fréchet mean in the quotient space, we have no reason to suspect any inconsistency.

Moreover it is also possible to define and use isometric actions on curves~\cite{hit,kur} or on surfaces~\cite{kur2} where our work can be directly applied. The previous works related to the inconsistency of template estimation~\cite{big,mio2,dev2} focused on isometric action, which is a restriction to real applications. That~is why we provide, in  Section \ref{sec:noni}, some insights of the non invariant case: the inconsistency also appears as soon as the noise level is large enough.

This article is organized as follows: 
\Cref{sec:iso} is dedicated for isometric action. More precisely, in \Cref{sec:inc}, we study the presence of the inconsistency and we establish the asymptotic behavior when the noise parameter $\sigma$ tends to $\infini$. In \Cref{sec:max} we detail the max-max algorithm and its properties. In \Cref{sec:sim} we illustrate the inconsistency with synthetic data. Finally in \Cref{sec:noni}, we prove the inconsistency for more general group action, when the noise level is large enough.
We do it in two settings, the first one is that the group contains a subgroup acting isometrically on $M$, the second one is that the group acts linearly on the space $M$.

\section{Inconsistency of Template Estimation with an Isometric Action}
\label{sec:iso}

\subsection{Congruent Section and Computation of Fréchet Mean in Quotient Space}
\label{subsec:equisec}

Given points $m$ and $y$, there is a priori no closed formed expression in order to compute the quotient distance $\underset{g\in G}{\inf}\|g\cdot m-y\|$. Therefore computing and minimizing the variance in the quotient does not seem straightforward. 
There is one case where it may be possible: the existence of a congruent section. We say that $s:Q\to M$ is a section if $\pi\circ s=Id$, where $\pi:M\to Q$ is the canonical projection into the quotient space. Moreover we say that the section $s$ is congruent if:
\begin{equation*}
\forall o,o'\in Q \quad \|s(o)-s(o')\|=d_Q(o,o').
\end{equation*}

Then $\Sc=s(Q)$ the image of the quotient by the section is a part of $M$ which has an interesting~property:
\begin{equation*}
\forall p,q\in \Sc, \|p-q\|=d_Q([p],[q]).
\end{equation*}

In other words, the section gives us a part of $M$ containing a point of each orbit such that all points in $\Sc$ are already registered. Moreover, if $s$ is a section, $s':[m]\mapsto g\cdot s([m])$ is also a section, without loss of generality we can assume that $t_0=s([t_0])$.

In this case, the variance is equal to:
\begin{equation*}
F(m)=\E(\|s([m])-s([Y])\|^2),
\end{equation*}
where we recognize the variance of the random variable $s([Y])$. As we know that the element which minimizes the variance in a linear space is given by the expected value, we have that:

\begin{equation*}
F(m)\geq F(\E(s([Y]))).
\end{equation*}

Moreover this inequality is strict if and only if $m$ and $\E(s([Y]))$ are not in the same orbit. 

Therefore, we have a method in order to know if the estimation is consistent or not: computing $\E(s([Y]))$ and verifying if $t_0$ and $\E(s([Y]))$ are in the same orbit, and the consistency bias is given by $d_Q([t_0],[\E(s([Y]))])$.
Moreover if we take $m\in \Sc$, we have $F(m)=\E(\|m-s([Y])\|^2)$ and it is now straightforward that $F|_\Sc$ the restriction of $F$ to $\Sc$ is differentiable on $\Sc$ (We say that $F|_\Sc$ is differentiable on $\Sc$, even if $\Sc$ is not open, because $m\mapsto \E(\|m-s([Y])\|^2)$ is defined and differentiable on $M$, and is equal to $F|\Sc$), and that $\nabla F|_\Sc(m)=m-\E(s([Y])$ in particular $\|\nabla F|_\Sc(t_0)\|=\|t_0-\E(s([Y]))\|$ gives us the value of the bias. 

\begin{Example}
The action of rotations: $G=SO(n)$ acts isometrically on $M=\R^n$. We notice that the quotient distance is $d_Q([x],[y])=|\|x\|-\|y\||$.
We can check that $s([x])=\|x\|v$ is a section for $v$ an unitary vector. Therefore the computation of the bias is given by $d_Q([t_0],[\E(s([Y])])=|\E(\|Y\|)-\|t_0\|)|$.
\end{Example}
Unfortunately, the congruent section generally does not exist. Let us give an example:
\begin{Example}
Taking $N\in \N$ with $N\geq 3$, we consider the action of $G=\Z/N\Z$ on $M=\R^N$ by time translation, for $\bar{k}\in \Z/N\Z$, and $(x_1,x_2,\tp,x_N)$:
\begin{equation*}
\bar{k}\cdot (x_1,x_2,\tp,x_N)=(x_{1+k},x_{2+k},\tp ,x_{N+k}),
\end{equation*}
where indexes are taken modulo $N$. 
If we take $p_1=(0,5,0,\tp,0)$, $p_2=(0,3,2,0,\tp,0)$, $p_3=(2,3,0,\tp,0)$. By hand we can check that there is no $x\in [p_1]$, $y\in [p_2]$ and $z\in [p_3]$ such that $\|x-y\|=d_Q([p_1],[p_2])$, $\|x-z\|=d_Q([p_1],[p_3])$, and $\|y-z\|=d_Q([p_2],[p_3])$. Thus, a congruent section in $Q=M/G$ does not exists.
\end{Example}
We can generalize this simple example by taking a non finite group:

\begin{Example}
Let us take $M=L^2(\R/\Z)$ the set of 1-periodic functions such that $\int_0^1 f^2(t)dt<+\infini$. $G=\R/\Z$ acts on $L^2(\R/\Z)$ by time translation defined by:

\begin{equation*}
\tau\in \R/\Z, \: f\in L^2(\R/\Z)\mapsto f_\tau \mbox{ with} f(x)=f(x+\tau).
\end{equation*}
Then a section in $Q=M/G$ does not exists.
\end{Example}
\begin{proof}
Let us take $f_1=\fc_{[\frac14,\frac34]}$, $f_2=f_1+2\fc_{[\frac14,\frac14+\eta]}$ and~$f_2=f_1+2\fc_{[\frac14+\eta,\frac14+2\eta]}$ for some $\eta\in (0,\frac14)$ (see \cref{fig:f1f2f3}). Let~us suppose that a section $s$ exists, then without loss of generality we can assume that $s([f_1])=f_1$, then we should have $\|f_1-s([f_2])\|=\|s([f_1])-s([f_2])\|=d_Q([f_1],[f_2])$ in other words, $s([f_2])$ should be registered with respect to $f_1$. 
For $\tau\in \R/\Z$ we can verify that $\|f_1-\tau\cdot f_2\|\geq \|f_1-f_2\|$ and that this inequality is strict as soon as $\tau\neq 0$. Then $f_2$ is the only element of $[f_2]$ registered with $f_1$ then $s([f_2])=f_2$. Likewise for $s([f_3])=f_3$, then we should have:
\begin{equation*}
d_Q([f_2],[f_3])=\|f_2-f_3\|,
\end{equation*}

However it is easy to verify that \mbox{$d_Q^2([f_2],[f_3])\leq\| \eta\cdot f_2-  f_3\|^2=2\eta<8\eta=\|f_2-f_3\|^2=d_Q([f_2],[f_3])$}. This is a contradiction. Therefore, a congruent section does not exist.
\end{proof}
\vspace{-12pt}
\begin{figure}[h]
\centering
\subfloat[$f_1$]{\includegraphics[clip=true,trim=2.6cm 7.5cm 2.55cm 7cm, width=0.315\textwidth]{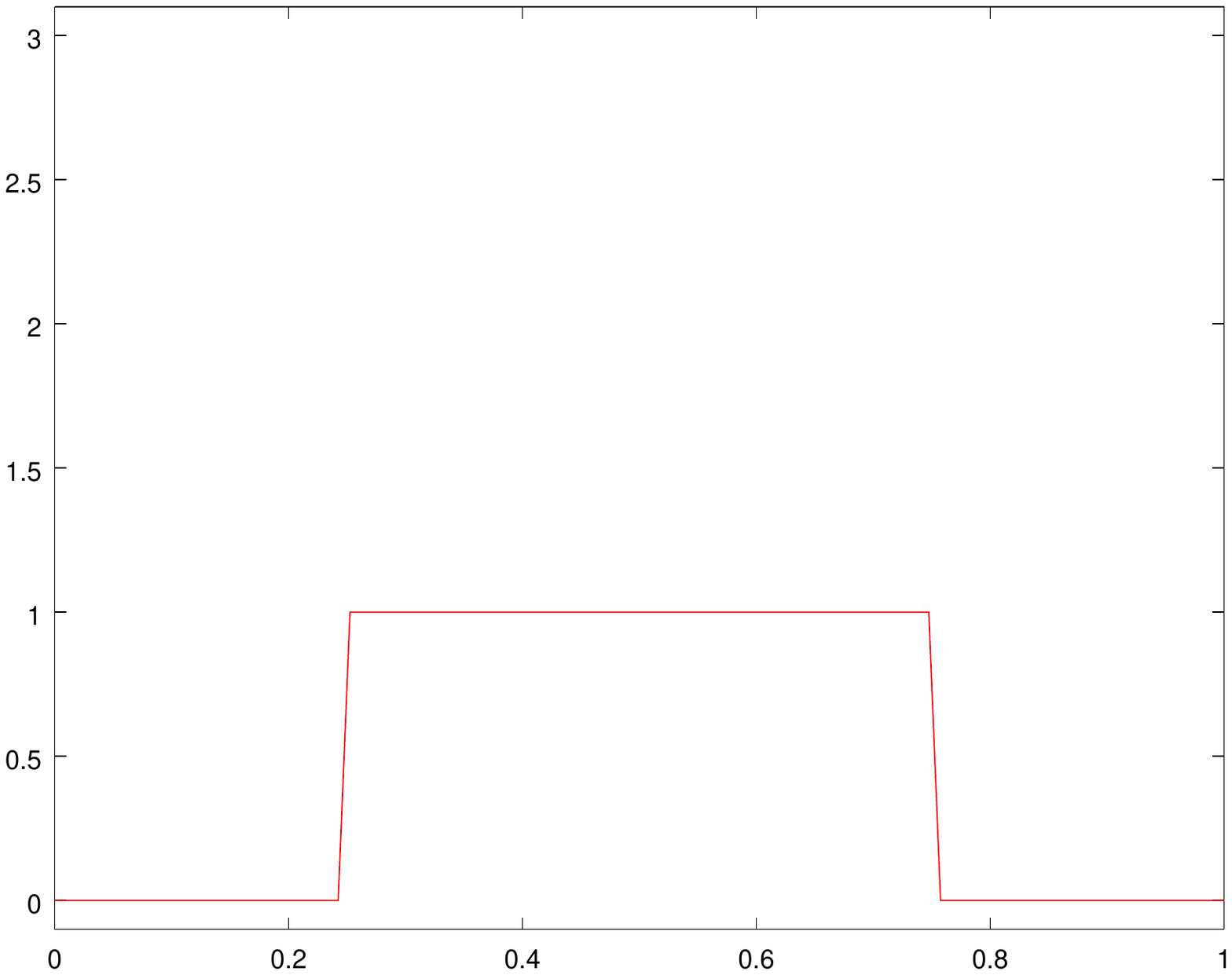}}
\:
\subfloat[$f_2$]{\includegraphics[clip=true,trim=2.6cm 7.5cm 2.55cm 7cm, width=0.315\textwidth]{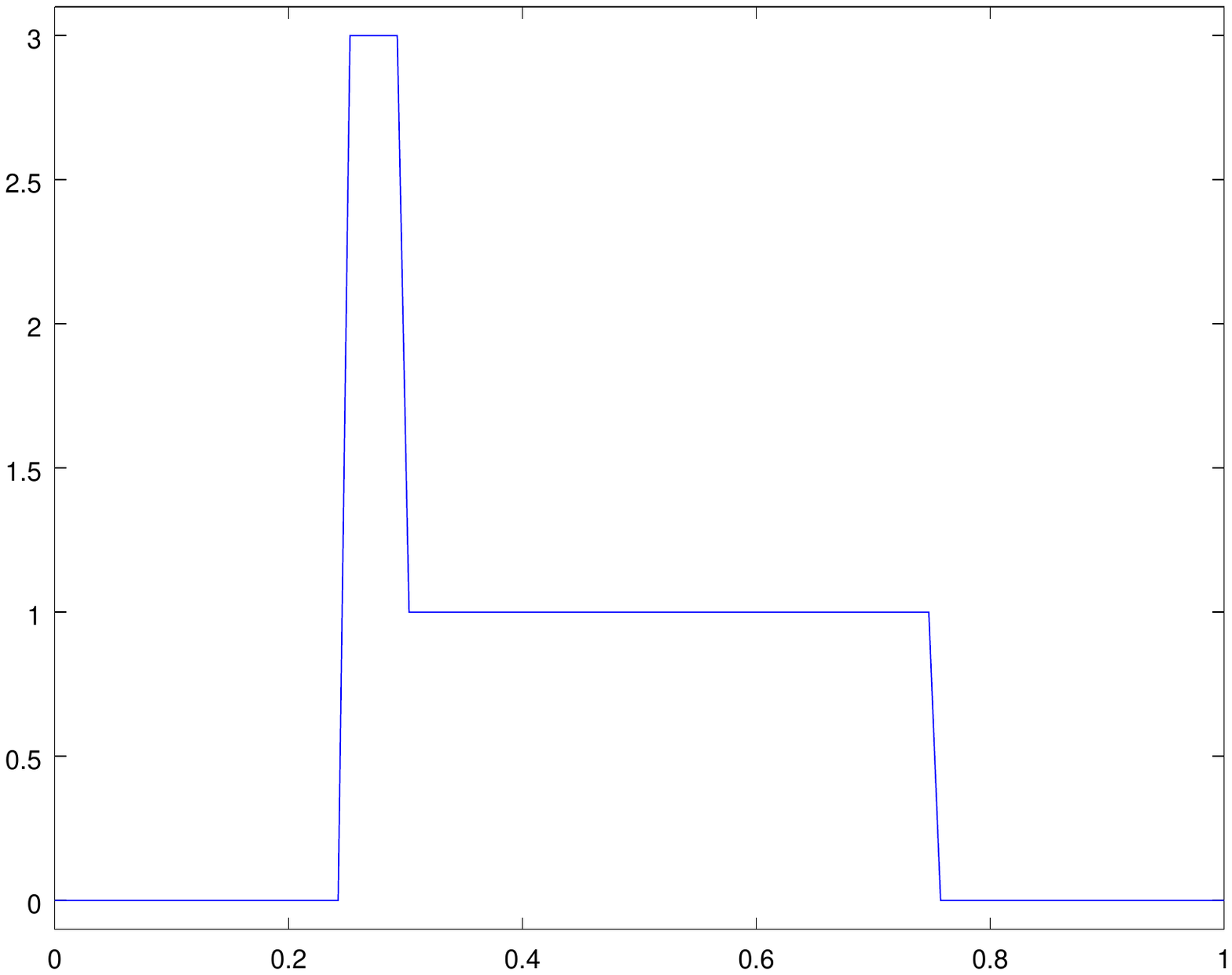}}
\:
\subfloat[$f_3$]{\includegraphics[clip=true,trim=2.6cm 7.5cm 2.55cm 7cm, width=0.315\textwidth]{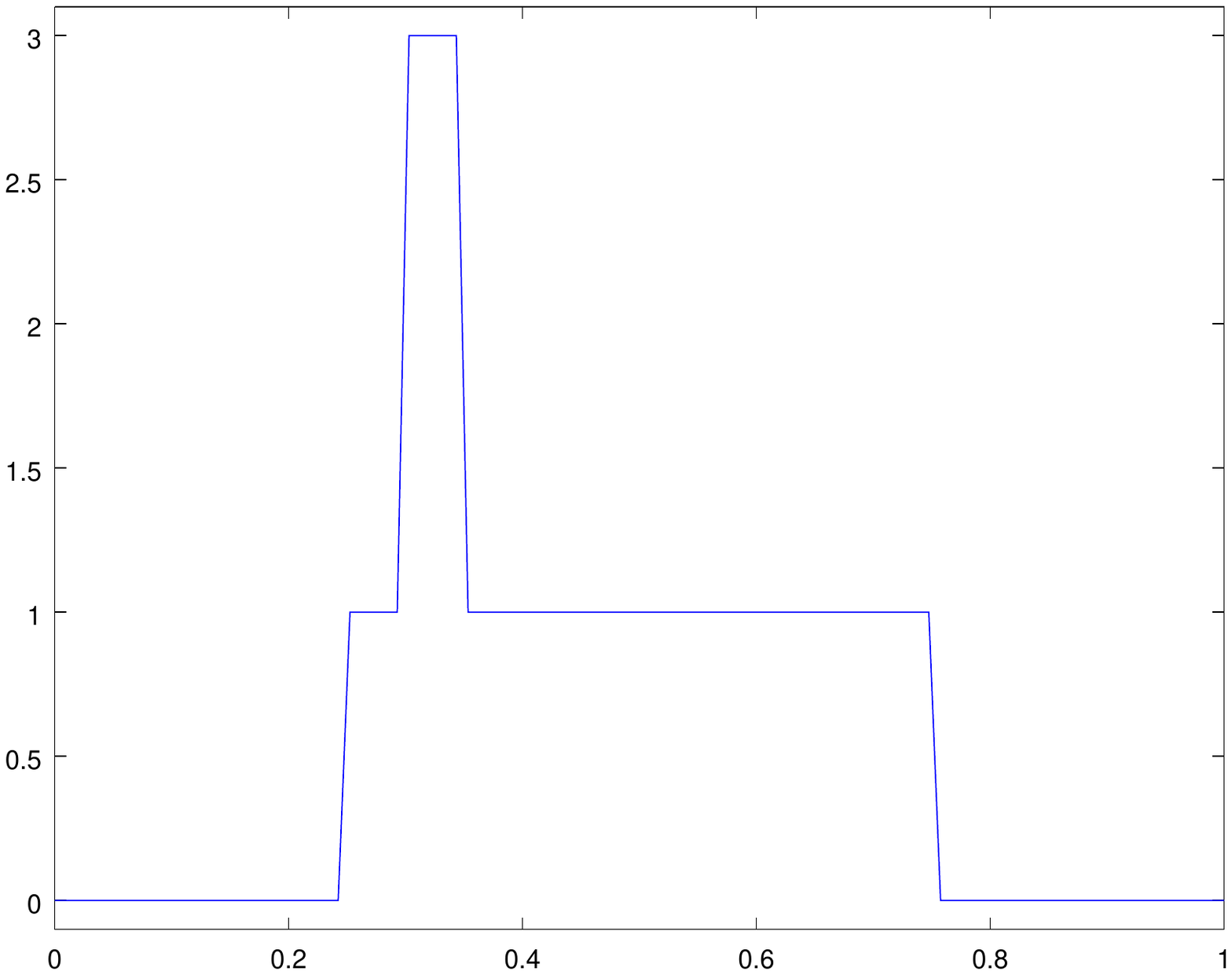}}
\caption[Three functions which can not be all registered with each other.]{Representation of the three functions $f_1$, $f_2$ and $f_3$ with $h=0.05$. the functions $f_2$ and $f_3$ are registered with respect to $f_1$. However $f_2$ and $f_3$ are not registered with each other, since it is more profitable to shift $f_2$ in order to align the highest parts of $f_2$ and $f_3$.} 
\label{fig:f1f2f3}
\end{figure}

When the congruent section exists, then the quotient can be included in a part $\Sc$ of the ambient space $M$ and the metric $d_M$ and $d_Q$ are corresponding.
The existence of a congruent section indicates us that the quotient space is not so complicated.
Indeed when there is an existence of a congruent section, the quotient space is embedded in the ambient space with respect to the distances in the quotient space and in the ambient space. In that case computations are easier, projecting data on this part $S$ and taking the mean. Then when such a congruent section does not exist, computing the Fréchet mean in quotient space is not so obvious. However, we can established proofs of inconsistency which are less tight. In~this article we prove that the method is inconsistent when the noise is large. 

\subsection{Inconsistency and Quantification of the Consistency Bias}
\label{sec:inc}
We start with \Cref{theo:Ks} which gives us an asymptotic behavior of the consistency bias when the noise level $\sigma$ tends to infinity. One key notion in \Cref{theo:Ks} is the concept of fixed point under the action $G$: a point $x\in M$ is a fixed point if for all $g\in G, \: g\cdot x=x$. We require that the support of the noise $\epsilon$ is not included in the set of fixed points. However, this condition is almost always fulfilled. For~instance in $\R^n$ the set of fixed points under a linear group action is a null set for the Lebesgue measure (unless the action is trivial: $g\cdot x=x$ for all $g\in G$ but this situation is irrelevant).  

\begin{Theorem}
\label{theo:Ks}
Let us suppose that the support of the noise $\epsilon$ is not included in the set of fixed points under the group action. Let $Y$ be the observable variable defined in Equation~(\ref{modgenforward}). If the Fréchet mean of $[Y]$ exists, then we have the following lower and upper bounds of the consistency bias noted $CB$:
\begin{equation}
\sigma K-2\|t_0\|\leq CB\leq \sigma K+2\|t_0\|,
\label{ineg}
\end{equation}
where $K=\underset{\|v\|=1}{\sup} \:\E\left( \underset{g\in G} \sup \:\psh{ v}{g\cdot\epsilon}\right)\in (0,1]$, $K$ is a constant which depends only of the standardized noise and of the group action.
The consistency bias has the following asymptotic behavior when the noise level $\sigma$ tends to infinity:
\begin{equation}
CB=\sigma K+o(\sigma) \mbox{ as } \sigma\to +\infini.
\label{equivalent}
\end{equation}
\end{Theorem}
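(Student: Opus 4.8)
The plan is to reduce the whole statement to a comparison between the observed variable $Y$ and the pure noise $\sigma\epsilon$, using the ``polarization identity'' \eqref{polari}. First I would rewrite the variance: applying \eqref{polari} with $a=x$ and $b=Y$ and taking expectations gives
\[
F(x)=\|x\|^2+\E(\|Y\|^2)-2\,J(x),\qquad J(x):=\E\Big(\sup_{g\in G}\psh{x}{g\cdot Y}\Big).
\]
The map $J$ is convex (a supremum of linear functionals, then an expectation) and positively homogeneous of degree one, so the minimization of $F$ decouples into a radial and an angular part: writing $x=rv$ with $\|v\|=1$ and $r\ge 0$, the term $r^2-2rJ(v)$ is minimized at $r=J(v)$, and the best direction is the one maximizing $J$. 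Since a Fréchet mean is assumed to exist, this forces every minimizer $m_\star$ to satisfy
\[
\|m_\star\|=L:=\sup_{\|v\|=1}\E\Big(\sup_{g\in G}\psh{v}{g\cdot Y}\Big).
\]

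Next I would convert this bound on the \emph{norm} of $m_\star$ into a bound on $CB=d_Q([t_0],[m_\star])$ via the triangle inequality for $d_Q$ based at the orbit $[0]$. Because the action is linear and isometric, $g\cdot 0=0$ and $d_Q([x],[0])=\|x\|$, whence
\[
\|m_\star\|-\|t_0\|\ \le\ CB\ \le\ \|t_0\|+\|m_\star\|.
\]
It therefore suffices to prove $|L-\sigma K|\le \|t_0\|$: adding and subtracting $\|t_0\|$ then yields both inequalities in \eqref{ineg}, and \eqref{equivalent} follows at once since the error $2\|t_0\|$ is a constant, hence $o(\sigma)$ as $\sigma\to+\infini$.

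The core of the argument is the comparison of $L$ with $\sigma K$. For a fixed unit vector $v$, linearity of the action gives $g\cdot Y=(g\Phi)\cdot t_0+\sigma\,(g\cdot\epsilon)$, so
\[
\sup_{g\in G}\psh{v}{g\cdot Y}=\sup_{g\in G}\Big(\psh{v}{(g\Phi)\cdot t_0}+\sigma\,\psh{v}{g\cdot\epsilon}\Big).
\]
Since $g\Phi$ acts isometrically, Cauchy--Schwarz bounds $\psh{v}{(g\Phi)\cdot t_0}$ between $-\|t_0\|$ and $\|t_0\|$, so the whole supremum stays within $\|t_0\|$ of $\sigma\sup_{g}\psh{v}{g\cdot\epsilon}$; taking expectations and then the supremum over $\|v\|=1$ produces $|L-\sigma K|\le\|t_0\|$, as required. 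I would finish with the side claims on $K$: the bound $K\le 1$ comes from Cauchy--Schwarz together with $\E(\|\epsilon\|)\le \E(\|\epsilon\|^2)^{1/2}=1$, and $K>0$ follows from the hypothesis that the support of $\epsilon$ is not contained in the fixed-point set (otherwise $g\cdot\epsilon=\epsilon$ almost surely for all $g$, and each $J(v)$ would collapse to $\psh{v}{\E(\epsilon)}=0$).

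The step I expect to be the main obstacle is the identification $\|m_\star\|=L$: the radial/angular splitting of the minimization must be justified carefully in a possibly infinite-dimensional Hilbert space, which means checking measurability of $g\mapsto\psh{x}{g\cdot Y}$, the finiteness and convexity of $J$, and the fact that the assumed existence of the Fréchet mean exactly supplies the attainment of the supremum defining $L$. Once this structural fact is secured, the triangle-inequality step and the $Y$-versus-$\sigma\epsilon$ comparison are routine.
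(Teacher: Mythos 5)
Your route is the same as the paper's: expand the variance along each half-line $\R^+v$ using the isometry, identify the optimal radius via the registration score $\lambda(v)=\E\left(\sup_{g\in G}\psh{v}{g\cdot Y}\right)$, deduce $\|m_\star\|=\sup_{\|v\|=1}\lambda(v)$, and combine the triangle inequality $\|m_\star\|-\|t_0\|\leq CB\leq \|m_\star\|+\|t_0\|$ with the Cauchy--Schwarz comparison $|\lambda(v)-\sigma\theta(v)|\leq\|t_0\|$ coming from the decomposition $Y=\Phi\cdot t_0+\sigma\epsilon$; this yields \eqref{ineg} and hence \eqref{equivalent} exactly as in the paper. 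However, there is a genuine gap in your treatment of $K>0$. Your parenthetical argument shows: if the support of $\epsilon$ \emph{were} contained in the fixed-point set, then every $J(v)$ would collapse to $\psh{v}{\E(\epsilon)}=0$, hence $K=0$. That is the converse implication --- it proves the hypothesis is \emph{necessary} for $K>0$, not that the hypothesis \emph{implies} $K>0$, which is what the theorem asserts (and this positivity is precisely what turns \eqref{equivalent} into a statement of unavoidable, linearly growing bias). The paper closes this with an actual construction: take $x$ in the support of $\epsilon$ and $g_0\in G$ with $g_0\cdot x\neq x$, set $v_0=g_0\cdot x/\|x\|$; then $\psh{v_0}{g_0\cdot x}=\|x\|>\psh{v_0}{x}$, the inequality being strict because $g_0\cdot x$ and $x$ have equal norms but are distinct, so by continuity of the dot product the strict inequality persists on a ball $B(x,r)$, which has positive probability under $\epsilon$ by the definition of the support; hence $\theta(v_0)=\E\left(\sup_{g\in G}\psh{v_0}{g\cdot\epsilon}\right)>\E(\psh{v_0}{\epsilon})=0$ and $K\geq\theta(v_0)>0$. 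You need this (or an equivalent) argument; without it the claim $K\in(0,1]$ is unproven.

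A second, smaller issue: over $r\geq 0$ the map $r\mapsto r^2-2rJ(v)$ is minimized at $r=J(v)^+$, not at $J(v)$; when $J(v)<0$ the constrained minimizer is $r=0$. Consequently your identity $\|m_\star\|=L=\sup_{\|v\|=1}J(v)$ tacitly presupposes $L\geq 0$, a point missing from your list of obstacles (measurability, convexity, attainment). The repair is short and is the one the paper makes with its positive part $\tilde\lambda=\lambda^+$: since $J(v)\geq\E(\psh{v}{Y})=\psh{v}{\E(\Phi\cdot t_0)}$, either $\E(\Phi\cdot t_0)=0$ and every $J(v)\geq 0$, or choosing $v$ aligned with $\E(\Phi\cdot t_0)$ gives $L\geq\|\E(\Phi\cdot t_0)\|>0$; in both cases the maximization of $\left(J^+\right)^2$ reduces to that of $J$. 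With these two points filled in, your proof coincides with the paper's.
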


In the following we note by $S$ the unit sphere of $M$. For $v\in S$, we call $\theta(v)=\E\left( \underset{g\in G} \sup\: \psh{v}{g\cdot \epsilon}\right)$, so that $K=\underset{v\in S}{\sup}\:\theta(v)$. The sketch of the proof is the following:
\begin{itemize} \item $K>0$ because the support of $\epsilon$ is not included in the set of fixed points under the action of $G$.
\item $K\leq 1$ is the consequence of the Cauchy-Schwarz inequality.
\item The proof of Inequalities~\eqref{ineg} is based on the triangular inequalities:
\begin{equation*}
\|m_\star\|-\|t_0\|\leq CB=\inf_{g\in G}\| t_0-g\cdot m_\star\|\leq \|t_0\|+\|m_\star\|,
\end{equation*}
where $m_\star$ minimizes $F$: having a piece of information about the norm of $m_\star$ is enough to deduce a piece of information about the consistency bias. 
\item The asymptotic Taylor expansion of the consistency bias~\eqref{equivalent} is the direct consequence of inequalities~\eqref{ineg}.
\end{itemize}

\begin{proof}[Proof of \Cref{theo:Ks}]
We note $S$ the unit sphere in $M$. In order to prove that $K>0$, we take $x$ in the support of $\epsilon$ such that $x$ is not a fixed point under the action of $G$. It exists $g_0\in G$ such that $g_0\cdot x\neq x$. We note $v_0=\frac{g_0\cdot x}{\|x\|}\in S $, we have $\psh{v_0}{g_0\cdot x}=\|x\|>\psh{v_0}{x}$ and by continuity of the dot product it exists $r>0$ such that: $
\forall y\in B(x,r)\quad \psh{v_0}{g_0\cdot y}>\psh{v_0}{y}
$ as $x$ is in the support of $\epsilon$ we have $\P(\epsilon \in B(x,r))>0$, it follows:
\begin{equation}
\P\left( \underset{g\in G}{\sup} \psh{v_0}{g\cdot \epsilon}>\psh{v_0}{\epsilon}\right)>0.
\label{inegv0}
\end{equation}

Thanks to Inequality~\eqref{inegv0} and the fact that $\sup_{g\in G} \psh{v_0}{g\cdot \epsilon}\geq \psh{v_0}{\epsilon}$ we have:
\begin{equation*}
\theta(v_0)= \E\left(\underset{g\in G}{\sup} \psh{v_0}{g \cdot \epsilon}\right)>\E(\psh{v_0}{\epsilon})=\psh{v_0}{\E(\epsilon)}=\psh{v_0}{0}=0.
\end{equation*}

Then we get $K\geq \theta(v_0)>0$. Moreover, if we use the Cauchy-Schwarz inequality:
\begin{equation*}
K\leq \sup_{v\in S} \E(\|v\|\times \|\epsilon\|)\leq \E(\| \epsilon\|^2)^{\frac12}=1.
\end{equation*}

In order to prove Inequalities~\eqref{ineg}, we use the "polar" coordinates of a point in $M$ (see \cref{fig:polar}), every point in $M$ can be represented by $(r,v)$ where $r\geq 0$ is the radius, and $v$ belong to $S$ the unit sphere in $M$, $v$ represents the ``angle''. We compute $F(m)$ as a function of $(r,v)$. In a first step, we minimize this expression as a function of $r$, in a second step we minimize this expression as a function of $v$. This~makes appear the constant $K$.

\begin{figure}[h]
\centering
\begin{tikzpicture}[scale=0.8]
\clip (-2.9,-2.7) rectangle (3.1,2.9);

\draw (-2.9,-2.7) rectangle (3.1,2.9);
\draw (-2.7,2.7) node{$M$};
\draw (-2.9,2.9) circle (0.6) ;

\draw (1.6,1.2) node{$\bullet$} node[above left] {$t_0$};
\draw (0,0)--(2.8,2.1);

\draw (0,0) node{$\bullet$} node[left] {$0$};

\draw[red] (2.1,0) node{$\bullet$} node[below left] {$\tilde \lambda(v) v$};
\draw[->](0,0)--(0.8,0);
\draw[thick] (0.8,0) node[above] {$v$};
\draw (0,0)--(3,0);

\draw[red] plot file {M.txt};

\draw[red] (2,2) node{$\bullet$} node[above right] {$m_\star$};
\draw[red] (2,-2) node{$\bullet$} node[below right] {$m_\star'$};
\draw[red] (-2,2) node{$\bullet$} node[below right] {$m_\star''$};
\draw[red] (-2,-2) node{$\bullet$} node[below left] {$m_\star'''$};
\draw[dashed] (1.6,1.2)--(2,2);
\end{tikzpicture}
\caption[Minimization of the variance on each half-line.]{We minimize the variance on each half-line $\R^+v$ where $\|v\|=1$. The element which minimizes the variance on such a half-line is $\tilde \lambda(v) v$, where $\tilde \lambda(v)\geq 0$. We get a surface in $M$ by $S\in v\mapsto \tilde \lambda(v) v$ (which is a curve in this figure since we draw it in dimension 2). The Proof of \Cref{theo:Ks} states that if $[m_\star]$ is a Fréchet mean then $m_\star$ is an extreme point of this surface. On this picture there are four extreme points which are in the same orbit: we took here the simple example of the group of rotations of $0$, $90$, $180$ and $270$ degrees.}
\label{fig:polar} 
\end{figure}
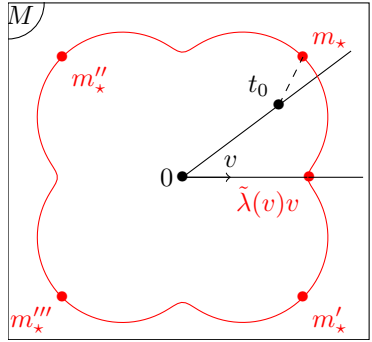

As we said, let us take $r\geq 0$ and $v\in S$, we expand the variance at the point $rv$:

\begin{equation}
F(r v)=\E\left( \underset{g\in G}{\inf} \|r v-g \cdot Y\|^2\right)=r^2-2r \E\left( \underset{g\in G}{\sup} \psh{v}{g \cdot Y}\right) +\E(\|Y\|^2).
 \label{expand}
\end{equation}

Indeed $\|g\cdot Y\|=\|Y\|$ thanks to the isometric action. We note $x^+=\max(x,0)$ the positive part of $x$. Moreover we define the two following functions:
\begin{equation*}
\lambda(v)=\E(\sup_{g\in G}\psh{v}{g \cdot Y})=\E(\sup_{g\in G}\psh{g \cdot Y}{v}) \mbox{ and } \tilde \lambda(v)=\lambda(v)^+ \mbox{ for } v\in S,
\end{equation*}
since that $f:x\in \R^+\mapsto x^2-2bx+c$ reaches its minimum at the point $r=b^+$ and $f(b^+)=c-(b^+)^2$, the $r_\star\geq0$ which minimizes~\eqref{expand} is $\tilde \lambda(v)$
and the minimum value of the variance restricted to the half line $\R^+v$ is:
\begin{equation*}
F(\tilde \lambda(v)v)=\E(\|Y\|^2)- \tilde \lambda(v)^2.
\end{equation*}

To find $[m_\star]$ the Fréchet mean of~$[Y]$, we need to maximize $\tilde \lambda(v)^2$ with respect to $v\in S$: 
\begin{equation*}
m_\star=\lambda(v_\star)v_\star \mbox{ with } v_\star\in \underset{v\in S}{\aM} \lambda(v).
\end{equation*}

Note that we remove the positive part and the square because $\aM \: \lambda=\aM \: (\lambda^+)^2$ indeed $\lambda$ takes a non negative value. In order to prove it let us remark that:
\begin{equation*}
\lambda(v)\geq \E(\psh{v}{\Phi\cdot t_0+\epsilon})=\psh{v}{\E(\Phi\cdot t_0)}+0,
\end{equation*}
then there is two cases: if $\E(\Phi \cdot t_0)=0$ then for any $v\in S$ we have $\lambda(v)\geq 0$, if $w=\E(\Phi \cdot t_0)\neq 0$ then we take $v=\frac{w}{\|w\|}\in S$, and we get $\lambda(v)\geq \psh{\frac{w}{\|w\|}}w=\|w\|\geq 0$.

As we said in the sketch of the proof we are interested in getting information about the norm of $\|m_\star\|$:
\begin{equation*}
\|m_\star\|=\lambda(v_\star)=\sup_{v\in S} \lambda.
\end{equation*}

Let $v\in S$, we have: $-\|t_0\|\leq \psh{v}{g\Phi\cdot t_0}\leq  \|t_0\|$ because the action is isometric. Now we decompose $Y=\Phi\cdot t_0+\sigma \epsilon$ and we get:
\begin{align}
\lambda(v)&=\E\left(\underset{g\in G}{\sup} \psh{v}{g\cdot Y}\right)=
\E\left(\underset{g\in G}{\sup} \left(\psh{v}{g\cdot \sigma\epsilon}+\psh{v}{g\Phi \cdot t_0 }\right)\right)\label{sumsup} \\
\lambda(v)&\leq \E\left(\underset{g\in G}{\sup} \left(\psh{v}{g \cdot \sigma\epsilon}+\|t_0\|\right) \right)  = \sigma\E\left(\underset{g\in G}{\sup} \psh{v}{g \cdot \epsilon}\right) +\|t_0\|  \label{cs1}\\
\lambda(v)&\geq \E\left(\underset{g\in G}{\sup} \left(\psh{v}{g\cdot \sigma\epsilon}\right) -\|t_0\|\right)  = \sigma \E\left(\underset{g\in G}{\sup} \psh{v}{g \cdot \epsilon}\right)-\|t_0\|. \label{cs2}
\end{align}

By taking the largest value in these inequalities with respect to $v\in S$, we get by definition of $K$:
\begin{equation}
-\|t_0\| +\sigma K \leq \|m_\star\|=\underset{v\in S}{\sup} \lambda(v) \leq \|t_0\|+\sigma K. \label{majthree}
\end{equation}

Moreover we recall the triangular inequalities:
\begin{equation}
\|m_\star\|-\|t_0\|\leq CB=\inf_{g\in G}\| t_0-g\cdot m_\star\|\leq \|t_0\|+\|m_\star\|,
\label{inegtri}
\end{equation}

Thanks to~\eqref{majthree} and to~\eqref{inegtri}, Inequalities~\eqref{ineg} are proved.
\end{proof}

\subsection{Remarks about \Cref{theo:Ks} and Its Proof}
We can ensure the presence of inconsistency as soon as the signal to noise ratio satisfies $\frac{\|t_0\|}{\sigma}<\frac{K}{2}$. Moreover, if the signal to noise ratio verifies $\frac{\|t_0\|}{\sigma}<\frac{K}{3}$ then the consistency bias is not smaller than $\|t_0\|$ i.e.,: $CB\geq  \|t_0\|$. In other words, the Fréchet mean in quotient space is too far from the template: the~template estimation with the Fréchet mean in quotient space is useless in this case.
In~\cite{dev2} we also gave lower and upper bounds as a function of $\sigma$ but these bounds were less informative than bounds given by \Cref{theo:Ks}. These bounds did not give the asymptotic behaviour of the consistency bias. Moreover, in~\cite{dev2} the lower bound goes to zero when the template becomes closed to fixed points. This~may suggest that the consistency bias was small for this kind of template. We prove here that it is not the case.

Note that \Cref{theo:Ks} is not a contradiction with~\cite{kur} where the authors proved the consistency of template estimation with the Fréchet mean in quotient space for all $\sigma>0$. Indeed their noise was included in the set of constant functions which are the fixed points under their group action. 

The constant $K$ appearing in the asymptotic behaviour of the consistency bias~\eqref{equivalent} is a constant of interest. We can give several (but similar) interpretations of $K$: 
\begin{itemize}
\item It follows from Equation~\eqref{ineg} that $K$ is the consistency bias with a null template $t_0=0$ and a standardized noise ($\sigma=1$).
\item From the proof of \Cref{theo:Ks} we know that $0<K\leq \E(\|\epsilon\|)\leq 1$. On the one hand, if $G$ is the group of rotations then $K=\E(\|\epsilon\|)$, 
because for all $v$ s.t. $\|v\|=1$, $\sup_{g\in G} \psh{v}{g\epsilon}=\|\epsilon\|$, by aligning $v$ and $\epsilon$. On the other hand if $G$ acts trivially (which means that $g\cdot x=x$ for all $g\in G,\: x\in M$) then $K=0$. The general case for $K$ is between two extreme cases: the group where the orbits are minimal (one point) and the group for which the orbits are maximal \mbox{(the whole sphere)}. We can state that the more the group action has the ability to align the elements, the larger the constant $K$ is  and the larger the consistency bias is.
\item The squared  quotient distance between two points is:
\begin{equation*}d_Q([a],[b])^2=\|a\|^2-2\sup_{g\in G} \psh{a}{g\cdot b}+\|b\|^2,
\end{equation*}
thus the larger $\sup_{g\in G} \psh{a}{g\cdot b}$, the smaller $d_Q([a],[b])$. $K=1-\frac12 \underset{\|v\|=1}{\inf} \E(d_Q^2([v],[ \epsilon]))$, encodes~the level of contraction of the quotient distance (or folding). The larger $K$ is, the more contracted the quotient space is.
\end{itemize}

One disadvantage of \Cref{theo:Ks} is that it ensures the presence of inconsistency for $\sigma$ large enough but it says nothing when $\sigma$ is small, in this case one can refer to~\cite{mio2} or~\cite{dev2}.

We can remark that this Theorem can be used as an alternating proof the following Theorem (which was already proved in ~\cite{dev2}), proving and quantifying inconsistency when the template is a fixed point:
\begin{Corollary}
\label{prop:fixedpoint}
Let $G$ acting isometrically on $M$ an Hilbert space. Let $t_0$ be a fixed point, and $\epsilon$ a standardized noise which support is not included in the set of fixed points. Then estimating the template with the Fréchet mean is inconsistent. Moreover if the Fréchet mean in quotient space exists then the consistency bias is equal to:
\begin{equation*}
CB=\sigma K.
\end{equation*}
\end{Corollary}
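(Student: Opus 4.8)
The plan is to reduce the fixed-point case to the case of a null template, for which the two bounds of \Cref{theo:Ks} coincide and pinch the consistency bias to the exact value $\sigma K$. So this is genuinely an application of \Cref{theo:Ks} rather than a separate computation.

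First I would exploit the fixed-point hypothesis together with the linearity of the isometric action to rewrite the variance. Since $t_0$ is fixed, $\Phi\cdot t_0=t_0$, hence $Y=t_0+\sigma\epsilon$, and for every $g\in G$ one has $g\cdot Y=t_0+\sigma\,g\cdot\epsilon$. Consequently, for any $m\in M$,
\[
\inf_{g\in G}\|m-g\cdot Y\|^2=\inf_{g\in G}\|(m-t_0)-\sigma\,g\cdot\epsilon\|^2=d_Q([m-t_0],[\sigma\epsilon])^2,
\]
so that $F(m)=\E\big(d_Q([m-t_0],[\sigma\epsilon])^2\big)=:G_0(m-t_0)$, where $G_0$ is precisely the variance functional associated with the generative model having null template $\tilde t_0=0$ and the same standardized noise.

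Then I would transport the minimizers through this translation. Since $m\mapsto m-t_0$ is a bijection of $M$, a point $m_\star$ minimizes $F$ if and only if $w_\star:=m_\star-t_0$ minimizes $G_0$, i.e.\ $[w_\star]$ is a Fréchet mean of $[\sigma\epsilon]$. Applying \Cref{theo:Ks} to this null-template model is legitimate because $K$ depends only on $\epsilon$ and on the group action (not on the template), and the hypothesis that the support of $\epsilon$ avoids the fixed-point set is unchanged. For $\tilde t_0=0$ the bounds \eqref{ineg} become $\sigma K\le d_Q([0],[w_\star])\le\sigma K$, whence $\|w_\star\|=d_Q([0],[w_\star])=\sigma K$. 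Finally I would compute the bias for the original model: using $g\cdot m_\star=g\cdot w_\star+t_0$ and again the fixed-point property,
\[
CB=d_Q([t_0],[m_\star])=\inf_{g\in G}\|t_0-g\cdot m_\star\|=\inf_{g\in G}\|{-}g\cdot w_\star\|=\|w_\star\|=\sigma K .
\]
Inconsistency then follows at once, since $K>0$ (the support of $\epsilon$ is not contained in the fixed-point set, as in the proof of \Cref{theo:Ks}) and $\sigma>0$ give $CB=\sigma K>0$, so $t_0$ cannot minimize $F$.

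The only genuine step is the reduction identity $F(m)=G_0(m-t_0)$; everything else is bookkeeping. This step hinges on two facts that must both be invoked carefully: that $t_0$ is fixed (to pull $t_0$ out of $g\cdot Y$) and that the action is linear (to get $g\cdot(\sigma\epsilon)=\sigma\,g\cdot\epsilon$ and thereby recognize a genuine quotient distance to $[\sigma\epsilon]$). One must also confirm that the constant $K$ is literally the same in both models, which holds by definition because $K$ is built purely from $\epsilon$ and the group action. I do not expect any analytic difficulty: the existence of the Fréchet mean is assumed in the statement, and the finiteness of $\E(\|Y\|^2)$ guaranteeing that $F$ and $G_0$ are well defined is already established earlier.
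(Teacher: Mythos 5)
Your proposal is correct and follows essentially the same route as the paper: the paper likewise reduces to the null-template case by translating $Y'=Y-t_0=\sigma\epsilon$ and then applies \Cref{theo:Ks} with $\|t_0\|=0$, where the two bounds of~\eqref{ineg} pinch to $\sigma K$. Your write-up merely makes explicit the steps the paper leaves implicit (the identity $F(m)=G_0(m-t_0)$, the transport of minimizers, and the use of linearity plus the fixed-point property to equate the two biases), which is exactly the right bookkeeping.
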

Indeed for $t_0=0$ which is a particular fixed point we have $CB=\sigma K$ thanks to \Cref{theo:Ks}. If $t_0$ is a fixed point non necessarily equal to $0$, we can define $Y'=Y-t_0=0+\sigma \epsilon$, in this random variable $0$ is the template we can apply the formula $CB=\sigma K$ to the random variable $Y'$, which concludes.

In the proof of \Cref{theo:Ks}, we have seen that the minimum of the variance restricted to the half-line $\R^+v$ for $v\in S$, was \begin{equation*}
\E(\|Y\|^2)-\left(\left(\E\left(\underset{g\in G}{\inf} \psh{v}{g\cdot Y}\right)\right)^+\right)^2.
\end{equation*}
therefore $\tilde \lambda(v)=\left(\E\left(\underset{g\in G}{\inf} \psh{v}{g\cdot Y}\right)\right)^+$ is a registration score: $\tilde \lambda(v)$ 
tells you how much it is a good idea to search the Fréchet mean of $[Y]$ in the direction pointed by $v$: the more $\tilde \lambda(v)$ is large, the more $v$ is a good choice. On the contrary when this value is equal to zero, it is useless to search the Fréchet mean in this direction.

Likewise, for $v\in S$, $\theta(v)=\E(\underset{g\in G}{\sup} \psh{g\cdot v}{\epsilon})$ is a registration score with respect to the noise, the~larger $\theta(v)$, the more the unit vector $v$ looks like to the noise $\epsilon$ after registration.

If $[m_\star]$ is a Fréchet mean of $[Y]$ we have seen that its norm verifies:
\begin{equation*}
\|m_\star\|=\underset{\|v\|=1}{\sup} \E(\sup_{g\in G} \psh{v}{g\cdot Y}).
\end{equation*}

Then if there is two different Fréchet means of $[Y]$ noted $[m_\star]$ and $[n_\star]$, we can deduce that $\|m_\star\|=\|n_\star\|$. Even if there is no uniqueness of the Fréchet mean in the quotient space, we can state that the representants of the different Fréchet means have all the same norm.
\begin{Remark}
We can also wonder if the converse of \Cref{theo:Ks} is true: if $\epsilon$ is a non biased noise always included in the set of fixed point, is $[t_0]$ a Fréchet mean of $[\Phi\cdot t_0+\sigma\epsilon]$?  A simple computation show that $t_0$ is a minimum of the variance:
\begin{align}
F(m)&=\E\left(\inf_{g\in G} \|m-g\cdot (\Phi t_0+\sigma\epsilon)\|^2\right)\nonumber\\
&=\|m\|^2+\E(\|\Phi t_0+\sigma\epsilon\|^2)-2\E (\sup_g \psh{m}{g\Phi t_0} +\psh{m}{g\sigma\epsilon})\nonumber\\
&=\|m\|^2+\E(\|\Phi t_0+\sigma\epsilon\|^2)-2\E \left(\sup_{g\in G} \psh{m}{g\cdot t_0}\right) -2\psh{m}{\E(\sigma\epsilon)} \nonumber\\
&=\|m\|^2+\E(\|\Phi t_0+\sigma\epsilon\|^2)-2\E \left(\sup_{g\in G} \psh{m}{g\cdot t_0}\right)\label{minse}
\end{align}

We see that the element $m$ which minimizes~\eqref{minse} does not depend of $\sigma$, in particular we can assume $\sigma=0$, and wonder which elements minimizes $F(m)=\E(\inf_{g\in G} \|m-g\Phi \cdot t_0\|^2)$, it becomes clear that only the points in the orbit of $t_0$ can minimize this variance. Then when $\epsilon$ is included in the set of fixed points, the estimation is always consistent for all $\sigma$. This is an alternative proof of the Theorem of consistency done by Kurtek et al.~\cite{kur}.
\end{Remark}

In the proof of \Cref{theo:Ks}, we have seen that the direction of the Fréchet mean of $[Y]$ is given by the supremum of this quantity~\eqref{sumsup}:
\begin{equation*}
\E\left( \sup_{g\in G} \psh{v}{g\cdot \sigma \epsilon} +\psh{v}{g\Phi \cdot t_0}\right).
\end{equation*}

This Equation is a good illustration of the difficulty to compute the Fréchet mean in quotient space. Indeed, we have on one side the contribution of the noise $\psh{v}{g\cdot \sigma\epsilon}$ and on the other side the contribution of the template $\psh{v}{g\Phi\cdot t_0}$, and we take the supremum of the sum of these two contributions over $g\in G$. Unfortunately the supremum of the sum of two terms is not equal to the sum of the supremum of each of these terms. Hence, it is difficult to separate these two contributions. However, we can intuit that when the noise is large, $\psh{v}{g\sigma \epsilon}$ prevails over $\psh{v}{g\Phi \cdot t_0}$, and the use of the Cauchy-Schwarz inequality in Equations~\eqref{cs1} and~\eqref{cs2} proves it rigorously. We can conclude that, when the noise is large, the direction of the Fréchet mean in the quotient space depends more on the noise than on the template.

\subsection{Template Estimation with the Max-Max Algorithm}
\vspace{-6pt}
\label{sec:max}
\subsubsection{Max-Max Algorithm Converges to a Local Minima of the Empirical Variance}
\Cref{sec:inc} can be understood as follows: if we want to estimate the template by minimizing the Fréchet mean with quotient space, then there is a bias. This supposes that we are able to compute such a Fréchet mean. In practice, we cannot minimize the exact variance in quotient space, because we have only a finite sample and not the whole distribution. In this section we study the estimation of the empirical Fréchet mean with the max-max algorithm. We assume that the group is finite. In this case, the registration can always be found by an exhaustive search. Hence, the numeric experiments which we conduct in \Cref{sec:sim} lead to an empirical Karcher mean in a finite number of steps.
In~a compact group acting continuously, the registration also exists but is not necessarily computable without approximation. 

If we have a sample: $Y_1,\tp, Y_I$ of independent and identically distributed copies of $Y$, then we define the empirical variance in the quotient space:
\begin{equation}
  M\ni x\mapsto   F_I(x)=\frac1I \sum_{i=1}^I d^2_Q([x],[Y_i])=\frac{1}{I} \sum_{i=1}^I \underset{g_i\in G}{\min} \|x-g_i\cdot Y_i\|^2=\frac{1}{I} \sum_{i=1}^I \underset{g_i\in G}{\min} \|g_i\cdot x-Y_i\|^2.
    \label{empvar}
\end{equation}

The empirical variance is an approximation of the variance. Indeed thanks to the law of large number we have $\lim_{I\to \infini} F_I(x)=F(x)$ for all $x\in M$. One element which minimizes globally (respectively locally) $F_I$ is called an empirical Fréchet mean (respectively an empirical Karcher mean). For $x\in M$ and $\underline g\in G^I$: $\underline g=(g_1,\tp, g_I)$ where $g_i\in G$ for all $i=1..I$ we define $J$ an auxiliary function by:
\begin{equation*}
 J(x, \underline g)=\frac1I \underset{i=1}{\overset{I}{\sum}} \|x-g_i\cdot Y_i\|^2=\frac1I \underset{i=1}{\overset{I}{\sum}} \|g_i^{-1}\cdot x- Y_i\|^2.
\end{equation*}

The max-max~\cref{algo} iteratively minimizes the function $J$ in the variable $x\in M$ and in the variable $\underline g\in G^I$ (see \cref{fig:J}):

\begin{algorithm}[h]
\caption{Max-Max Algorithm} 
\label{algo}
\begin{algorithmic}
\REQUIRE A starting point $m_0\in M$, a sample $Y_1,\tp,Y_I$.
\vspace{1mm}
\STATE $n=0$.
\vspace{1mm}
\WHILE{Convergence is not reached}
\vspace{1mm}
\STATE Minimizing $\underline g\in G^I\mapsto J(m_n,\underline g)$: we get $g_i^{n}$ by registering $Y_i$ with respect to $m_n$.
\vspace{1mm}
\STATE Minimizing $x\in M\mapsto J(x,\underline g^n)$: we get $m_{n+1}=\frac1I \underset{i=1}{\overset{I}{\sum}} g_i^n Y_i$.
\vspace{1mm}
\STATE $n=n+1$.
\vspace{1mm}
\ENDWHILE
\vspace{1mm}
\STATE $\hat m=m_{n}$
\end{algorithmic}
\end{algorithm}

First, we note that this algorithm is sensitive to the the starting point. However we remark that $m_1=\frac{1}{I}\sum_{i=1}^I g_i\cdot Y_i$ for some $g_i\in G$, thus without loss of generality, we can start from $m_1=\frac{1}{I} \sum_{i=1}^I g_i\cdot Y_i$ for some $g_i\in G$.
The empirical variance does not increase at each step of the algorithm since: 
\begin{equation*}
 F_I(m_n)=J(m_n,\underline g^n)\geq J(m_{n+1},\underline g^n)\geq J(m_{n+1},\underline g^{n+1})=F_I(m_{n+1})
\end{equation*}

\begin{Proposition}
\label{prop:fns}
As the group is finite, the convergence is reached in a finite number of steps.
\end{Proposition}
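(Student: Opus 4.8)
The plan is to exploit two facts: that from the first iteration onward every iterate lives in a fixed finite subset of $M$, and that the empirical variance $F_I(m_n)$ is non-increasing along the iterations (already established just above the statement). Combining these with a strict-convexity argument that upgrades a stall of the variance into a stall of the iterate will force the algorithm to stabilize after finitely many steps.

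First I would observe that, by the update rule, $m_{n+1}=\frac1I\sum_{i=1}^I g_i^n\cdot Y_i$ with $\underline g^n\in G^I$; hence for every $n\ge 1$ the iterate $m_n$ belongs to
\begin{equation*}
\mathcal A=\left\{\frac1I\sum_{i=1}^I g_i\cdot Y_i \ : \ (g_1,\tp,g_I)\in G^I\right\}.
\end{equation*}
Since $G$ is finite, $G^I$ has at most $|G|^I$ elements, so $\mathcal A$ is finite and $\{F_I(a):a\in\mathcal A\}$ is a finite subset of $\R$. In particular the non-increasing sequence $F_I(m_n)$ takes its values in this finite set.

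Next I would prove the key implication: if $F_I(m_{n+1})=F_I(m_n)$ then $m_{n+1}=m_n$. Recall the chain $F_I(m_n)=J(m_n,\underline g^n)\se J(m_{n+1},\underline g^n)\se J(m_{n+1},\underline g^{n+1})=F_I(m_{n+1})$. Equality of the two ends forces all intermediate inequalities to be equalities, in particular $J(m_n,\underline g^n)=J(m_{n+1},\underline g^n)$. But $x\mapsto J(x,\underline g^n)=\frac1I\sum_i\|x-g_i^n\cdot Y_i\|^2$ equals $\|x-m_{n+1}\|^2$ up to an additive constant, so it is strictly convex with unique minimizer $m_{n+1}$; since $m_n$ attains the same minimal value, it must coincide with $m_{n+1}$.

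Finally I would conclude by a counting argument: at each step either $F_I$ strictly decreases, or $F_I(m_{n+1})=F_I(m_n)$, in which case $m_{n+1}=m_n$ and the algorithm has converged. As $F_I(m_n)$ lives in the finite set above and is non-increasing, strict decreases can occur only finitely many times, so after finitely many iterations one necessarily reaches an index $n$ with $m_{n+1}=m_n$. The main obstacle is precisely ruling out that the algorithm cycles forever among distinct points sharing the same variance; the strict-convexity step is what excludes this, since it turns ``the variance stopped decreasing'' into ``the iterate stopped moving'', and this is the only nontrivial ingredient beyond the finiteness of $\mathcal A$.
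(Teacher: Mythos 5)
Your proof is correct and follows essentially the same route as the paper's: finiteness of the set of possible iterates forces the non-increasing sequence $F_I(m_n)$ to stabilize, and equality in the chain $F_I(m_n)=J(m_n,\underline g^n)\geq J(m_{n+1},\underline g^n)\geq J(m_{n+1},\underline g^{n+1})=F_I(m_{n+1})$ combined with the uniqueness of the minimizer of $x\mapsto J(x,\underline g^n)$ (your strict-convexity observation) yields $m_{n+1}=m_n$. You merely spell out more explicitly the counting step that the paper compresses into ``the sequence is stationary.''
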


\begin{proof}[Proof of \Cref{prop:fns}]
The sequence $(F_I(m_n))_{n\in \N}$ is non-increasing. Moreover the sequence $(m_n)_{n\in \N}$ takes value in a finite set which is:
$
\{\frac1I \sum_{i=1}^I g_i\cdot Y_i,\: g_i\in G\}.
$
Therefore, the sequence $(F_I(m_n))_{n\in\N}$ is stationary. Let $n\in \N$ such that $F_I(m_n)=F_I(m_{n+1})$. 
Hence the empirical variance did not decrease between step $n$ and step $n+1$ and we have:
\begin{equation*}
F_I(m_n)=J(m_n,\underline g_n)=J(m_{n+1},\underline g_n)=J(m_{n+1},\underline{g}_{n+1})=F_I(m_{n+1}),
\end{equation*}
as $m_{n+1}$ is the unique element which minimizes $m\mapsto J(m,\underline{g}_{n})$ we conclude that $m_{n+1}=m_n$. 
\end{proof}

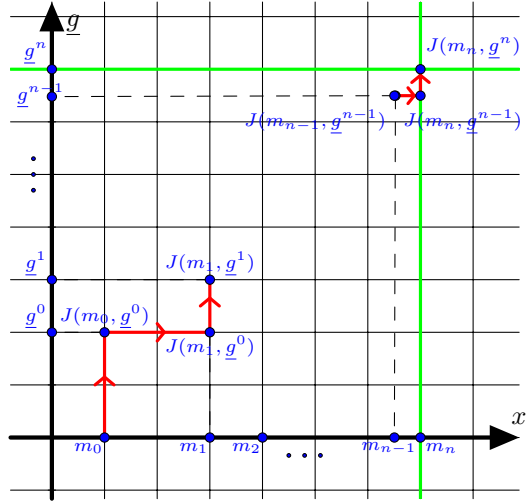
\begin{figure}[h]
    \centering
   \begin{tikzpicture}[line cap=round,line join=round,>=triangle 45,x=1cm,y=1cm,scale=0.7]
\draw [color=black, xstep=1.0cm,ystep=1.0cm] (-0.7852537722908061,-1.1625514403292634) grid (8.874746227709274,8.277448559670814);
\draw[->,ultra thick,color=black] (-0.7852537722908061,0.) -- (8.874746227709274,0.);
\foreach \x in {,1.,2.,3.,4.,5.,6.,7.,8.}
\draw[shift={(\x,0)},color=black] (0pt,-2pt);
\draw[color=black] (8.55474622770927,0.08) node [anchor=south west] {$x$};
\draw[->,ultra thick,color=black] (0.,-1.1625514403292634) -- (0.,8.277448559670814);
\foreach \y in {-1.,1.,2.,3.,4.,5.,6.,7.,8.}
\draw[shift={(0,\y)},color=black] (2pt,0pt) -- (-2pt,0pt);
\draw[color=black] (0.1,7.83744855967081) node [anchor=west] {$\underline g$};
\clip(-0.7852537722908061,-1.1625514403292634) rectangle (8.874746227709274,8.277448559670814);
\draw [line width=1.2pt,color=red] (1.,0.)-- (1.,2.);
\draw [line width=1.2pt,color=red] (1.,1.15) -- (1.18,1.);
\draw [line width=1.2pt,color=red] (1.,1.15) -- (0.82,1.);
\draw [dash pattern=on 5pt off 5pt] (1.,2.)-- (0.,2.);
\draw [dash pattern=on 5pt off 5pt] (3.,2.)-- (3.,0.);
\draw [line width=1.2pt,color=red] (1.,2.)-- (3.,2.);
\draw [line width=1.2pt,color=red] (2.135,2.) -- (2.,1.835);
\draw [line width=1.2pt,color=red] (2.135,2.) -- (2.,2.165);
\draw [dash pattern=on 5pt off 5pt] (3.,3.)-- (0.,3.);
\draw [dash pattern=on 5pt off 5pt] (6.524746227709255,6.497448559670791)-- (0.,6.48);
\draw [line width=1.2pt,color=green,domain=-0.7852537722908061:8.874746227709274] plot(\x,{(--49.-0.*\x)/7.});
\draw [line width=1.2pt,color=green] (7.,-1.1625514403292634) -- (7.,8.277448559670814);
\draw [line width=2.pt,color=red] (6.504746227709255,6.497395075415429)-- (6.524746227709255,6.497448559670791);
\draw [dash pattern=on 5pt off 5pt] (6.524746227709255,6.497448559670791)-- (6.504746227709255,0.);
\draw [line width=1.2pt,color=red] (3.,2.)-- (3.,3.);
\draw [line width=1.2pt,color=red] (3.,2.65) -- (3.18,2.5);
\draw [line width=1.2pt,color=red] (3.,2.65) -- (2.82,2.5);
\draw [line width=1.2pt,color=red] (6.524746227709255,6.497448559670791)-- (7.,6.497448559670791);
\draw [line width=1.2pt,color=red] (6.882373113854629,6.49744855967079) -- (6.762373113854628,6.347448559670789);
\draw [line width=1.2pt,color=red] (6.882373113854629,6.49744855967079) -- (6.762373113854628,6.647448559670791);
\draw [line width=1.2pt,color=red] (7.,6.497448559670791)-- (7.,7.);
\draw [line width=1.2pt,color=red] (7.,6.883724279835396) -- (7.165,6.748724279835395);
\draw [line width=1.2pt,color=red] (7.,6.883724279835396) -- (6.835,6.748724279835395);
\begin{scriptsize}
\draw [fill=blue] (1.,0.) circle (2.5pt);

\draw [fill=blue] (1.,2.) circle (2.5pt);
\draw [fill=blue] (0.,2.) circle (2.5pt);
\draw [fill=blue] (3.,2.) circle (2.5pt);
\draw [fill=blue] (3.,0.) circle (2.5pt);
\draw [fill=blue] (3.,3.) circle (2.5pt);
\draw [fill=blue] (0.,3.) circle (2.5pt);
\draw [fill=blue] (6.524746227709255,6.497448559670791) circle (2.5pt);
\draw [fill=blue] (7.,0.) circle (2.5pt);
\draw [fill=blue] (0.,7.) circle (2.5pt);
\draw [fill=blue] (7.,7.) circle (2.5pt);
\draw [fill=blue] (4.,0.) circle (2.5pt);
\draw [fill=blue] (6.504746227709255,0.) circle (2.5pt);
\draw [fill=blue] (0.,6.48) circle (2.5pt);
\draw [fill=blue] (4.5,-0.34) circle (1.0pt);
\draw [fill=blue] (4.8,-0.34) circle (1.0pt);
\draw [fill=blue] (5.1,-0.34) circle (1.0pt);
\draw [fill=blue] (6.504746227709255,6.497395075415429) circle (2.5pt);
\draw [fill=blue] (-0.35,5.3) circle (1.0pt);
\draw [fill=blue] (-0.35,5.) circle (1.0pt);
\draw [fill=blue] (-0.35,4.7) circle (1.0pt);
\draw [fill=blue] (7.,6.497448559670791) circle (2.5pt);

\draw[color=blue] (0.7,-0.2) node {$m_0$};
\draw[color=blue] (2.7,-0.2) node {$m_1$};
\draw[color=blue] (3.7,-0.2) node {$m_2$};
\draw[color=blue] (6.4,-0.2) node {$m_{n-1}$};
\draw[color=blue] (7.4,-0.2) node {$m_n$};

\draw[color=blue] (-0.3,2.3) node {$\underline g^0$};
\draw[color=blue] (-0.3,3.3) node {$\underline g^1$};
\draw[color=blue] (-0.21,6.5) node {$\underline g^{n-1}$};
\draw[color=blue] (-0.3,7.3) node {$\underline g^n$};

\draw[color=blue] (1,2.3) node {$J(m_0,\underline g^0)$};
\draw[color=blue] (3,1.7) node {$J(m_1,\underline g^0)$};
\draw[color=blue] (3,3.3) node {$J(m_1,\underline g^1)$};

\draw[color=blue] (5,6.0) node {$J(m_{n-1},\underline{g}^{n-1})$};
\draw[color=blue] (7.8,6.0) node {$J(m_n,\underline g^{n-1})$};
\draw[color=blue] (8,7.4) node {$J(m_n,\underline g^n)$};

\end{scriptsize}
\end{tikzpicture}
   
    \caption[Iterative minimization of the function $J$ in order to estimate empirical Fréchet mean.]{Iterative minimization of the function $J$ on the two axis, the horizontal axis represents the variable in the space $M$, the vertical axis represents the set of all the possible registrations $G^I$. Once the convergence is reached, the point $(m_n,g_n)$ is the minimum of the function $J$ on the two axis in green. Is~this point the minimum of $J$ on its whole domain? There are two pitfalls: firstly this point could be a saddle point, it can be avoided with \Cref{prop:regu}, secondly this point could be a local (but not global) minimum, this is discussed in \Cref{subsec:local}}
    \label{fig:J}  
\end{figure}

This proposition gives us a shutoff parameter in the max-max algorithm: we stop the algorithm as soon as $m_n=m_{n+1}$.
Let us call $\hat m$ the final result of the max-max algorithm. It may seem logical that $\hat m$ is at least a local minimum of the empirical variance. However this intuition may be wrong: let us give a toy counterexample, suppose that we observe $Y_1,\tp,Y_I$, due to the transformation of the group it is possible that $\sum_{i=1}^n Y_i=0$. We can start from $m_1=0$ in the max-max algorithm, as $Y_i$ and $0$ are already registered, the max-max algorithm does not transform $Y_i$. At step two, we still have $m_2=0$, by~induction the max-max algorithm stays at $0$ even if $0$ is not a Fréchet or Karcher mean of $[Y]$. Because~$0$ is equally distant from all the points in the orbit of $Y_i$, $0$ is called a focal point of $[Y_i]$. The~notion of focal point is important for the consistency of the Fréchet mean in manifold~\cite{bha}. Fortunately, the situation where $\hat m$ is not a Karcher mean is almost always avoided due to the following~statement:
\begin{Proposition}
\label{prop:regu}
Let $\hat{m}$ be the result of the max-max algorithm. If the registration of $Y_i$ with respect to $\hat{m}$ is unique, in other words, if $\hat m$ is not a focal point of $Y_i$ for all $i\in 1..I$ then
$\hat{m}$ is a local minimum of $F_I$: $[\hat{m}]$ is an empirical Karcher mean of $[Y]$.
\end{Proposition}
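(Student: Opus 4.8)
The plan is to use the identity $F_I(x)=\min_{\underline g\in G^I}J(x,\underline g)$ together with the fact that, under the uniqueness hypothesis, the minimizing registration stays constant near $\hat m$; this forces $F_I$ to coincide on a neighbourhood of $\hat m$ with a single strictly convex quadratic whose minimizer is exactly $\hat m$.

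First I would record what the stopping condition provides. At termination $\hat m=m_n=m_{n+1}$, and writing $\underline g^\star=\underline g^n$ for the registration produced at $\hat m$, the update step gives $\hat m=\frac1I\sum_{i=1}^I g_i^\star\cdot Y_i$, with each $g_i^\star\cdot Y_i$ realizing the infimum defining $d_Q([\hat m],[Y_i])$. The hypothesis that $\hat m$ is not a focal point of any $Y_i$ means precisely that, for each $i$, the point $g_i^\star\cdot Y_i$ is the \emph{strictly} unique nearest point of the orbit $[Y_i]$ to $\hat m$.

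Next I would upgrade pointwise uniqueness to a uniform strict margin and propagate it. Since $G$ is finite, each orbit $[Y_i]$ is a finite set, so uniqueness yields $\|\hat m-g_i^\star\cdot Y_i\|<\|\hat m-z\|$ for every $z\in[Y_i]\setminus\{g_i^\star\cdot Y_i\}$. By continuity of $x\mapsto\|x-z\|$, and because both the index set $\{1,\dots,I\}$ and each orbit are finite, these finitely many strict inequalities survive on a common open neighbourhood $U$ of $\hat m$. Hence for every $x\in U$ and every $i$ the registration of $Y_i$ to $x$ is still attained at $g_i^\star\cdot Y_i$, so that
\[
F_I(x)=J(x,\underline g^\star)=\frac1I\sum_{i=1}^I\|x-g_i^\star\cdot Y_i\|^2\qquad\text{for all }x\in U.
\]

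Finally I would conclude by convexity: the right-hand side is a strictly convex quadratic with gradient $2\bigl(x-\tfrac1I\sum_i g_i^\star\cdot Y_i\bigr)=2(x-\hat m)$, hence uniquely minimized at $\hat m$. Therefore $F_I(x)>F_I(\hat m)$ for all $x\in U\setminus\{\hat m\}$, so $\hat m$ is a (strict) local minimum of $F_I$ and $[\hat m]$ is an empirical Karcher mean. The only delicate step is the middle one --- turning pointwise uniqueness into a locally constant optimal registration --- and it is exactly here that finiteness of $G$ is essential: it restricts the competition to finitely many distances. The focal-point counterexample preceding the statement illustrates what fails otherwise, since then the minimizing index can switch arbitrarily close to $\hat m$ and $F_I$ need not be locally a single smooth quadratic.
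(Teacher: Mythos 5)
Your proof is correct and follows essentially the same route as the paper's: both use finiteness of $G$ plus continuity to show the optimal registration $\underline g^\star$ is locally constant around $\hat m$, so that $F_I$ coincides near $\hat m$ with the quadratic $J(\cdot,\underline g^\star)$, whose unique minimizer is $\hat m$ because the stopping condition makes $\hat m$ the barycenter $\frac1I\sum_i g_i^\star\cdot Y_i$. Your version merely spells out more explicitly the gradient computation and the strictness of the local minimum, which the paper leaves implicit.
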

Note that, if we call $z$ the registration of $y$ with respect to $m$, then the registration is unique if and only if $\psh{m}{z-g\cdot z}\neq 0$ for all $g\in G\setminus\{e\}$. Once the max-max algorithm has reached convergence, it~suffices to test this condition for $\hat{m}$ obtained by the max-max algorithm and $Y_i$ for all $i$. This condition is in fact generic and is always obtained in practice.
\begin{proof}[Proof of \Cref{prop:regu}]
We call $g_i$ the unique element in $G$ which register $Y_i$ with respect to $\hat m$, for all $h\in G\setminus\{g_i\}$, $\|\hat m-g_i\cdot Y_i\|<\|\hat m-h_i\cdot Y_i\|$. By continuity of the norm we have for $a$ close enough to $m$: $\|a-g_i\cdot Y_i\|<\|a-h_i\cdot Y_i\|$ for all $h_i \neq g_i$ (note that this argument requires a finite group). The~registrations of $Y_i$ with respect to $m$ and to $a$ are the same: 
\begin{equation*}
    F_I(a)=\frac1I \sum_{i=1}^I \|a-g_i\cdot Y_i\|^2=J(a,\underline g)\geq J(\hat{m},\underline g)=F_I(\hat{m}),
\end{equation*} 
because $m\mapsto J(m,\underline g)$ has one unique local minimum $\hat{m}$.
\end{proof}

\begin{Remark} We remark the max-max algorithm is in fact a gradient descent. The gradient descent is a general method to find the minimum of a differentiable function. Here we are interested in the minimum of the variance $F$: let $m_0\in M$ and we define by induction the gradient descent of the variance $m_{n+1}=m_n-\rho \nabla F(m_n)$, where~$\rho>0$ and $F$ the variance in the quotient space. In~\cite{dev2}, the gradient of the variance in quotient space for finite group and for a regular point $m$ was computed ($m$ is regular as soon as $g\cdot m=m$ implies $g=e$), this~leads to:
\begin{equation*}
    m_{n+1}=m_n-2\rho\left[m_n-\E(g(Y,m_n)\cdot Y)\right],
\end{equation*}
where $g(Y,m_n)$ is the almost-surely unique element of the group which registers $Y$ with respect to $m_n$. Now if we have a set of data $Y_1,\tp, Y_n$ we can approximated the expectation which leads to the following approximated gradient descent:
\begin{equation*}
    m_{n+1}= m_n(1-2\rho) +\rho \frac2I \sum_{i=1}^I g(Y_i,m_n)\cdot Y_i,
\end{equation*}
now by taking $\rho=\frac12$ we get $m_{n+1}= \frac1I \sum_{i=1}^I g(Y_i,m_n)\cdot Y_i$. So the approximated gradient descent with $\rho=\frac12$ is exactly the max-max algorithm. However, the max-max algorithm for finite group, is proved to be converging in a finite number of steps which is not the case for gradient descent in general.
\end{Remark}

\subsection{Simulation on Synthetic Data}
\label{sec:sim}

In this Section, we consider data in an Euclidean space $\R^N$ equipped with its canonical dot product $\psh{\cdot }{\cdot}$, and $G=\Z/\N\Z$ acts on $\R^N$ by time translation on coordinates:
\begin{equation*}
(\bar{k}\in \Z/N\Z, (x_1,\tp,x_N)\in \R^N)\mapsto (x_{1+k},x_{2+k},\tp x_{N+k}),
\end{equation*}
where indexes are taken modulo $N$. This space models the discretization of functions defined on $[0,1]$ with $N$ points. This action is found in~\cite{all} and used for neuroelectric signals in~\cite{hit}. The registration between two vectors can be made by an exhaustive research but it is faster with the fast Fourier transform~\cite{fft}.  

\subsubsection{Max-Max Algorithm with a Step Function as Template}

We display an example of a template and template estimation with the max-max algorithm on Figure \ref{F6}a.
This experiment was already conducted in~\cite{all}, but no explanation of the appearance of the bias was provided.
We know from \Cref{sec:max} that the max-max output is an empirical Karcher mean, and that this result can be obtained in a finite number of steps. Taking $\sigma=10$ may seem extremely high, however the standard deviation of the noise at each point is not $10$ but $\frac{\sigma}{\sqrt N}=1.25$ which is reasonable.

The sample size is $10^5$, the algorithm stopped after 247 steps, and $\hat m$ the estimated template (in red on the Figure \ref{F6}a) is not a focal points of the orbits $[Y_i]$, then \Cref{prop:regu} applies. We call empirical bias (noted EB) the quotient distance between the true template and the point $\hat m$ given by the max-max result. On this experiment we have $\frac{EB}{\sigma}\simeq 0.11$. Of course, one could think that we estimate the template with an empirical bias due to a too small sample size which induces fluctuation. To~reply to this objection, we keep in memory~$\hat{m}$ obtained with the max-max algorithm. If there was no inconsistency then we would have $F(t_0)\leq F(\hat{m})$. We do not know the value of the variance $F$ at these points, but thanks to the law of large number, we know that:
\begin{equation*}
F(t_0)=\underset{I\to \infini}{\lim} F_I(t_0) \mbox{ and } F(\hat{m})=\underset{I\to \infini}{\lim} F_I(\hat{m}),
\end{equation*}

Given a sample, we compute $F_I(t_0)$ and $F_I(\hat{m})$ thanks to the definition of the empirical variance $F_I$~\eqref{empvar}. We display the result on Figure \ref{F6}b, this tends to confirm that $F(t_0)>F(\hat{m})$. In other word, the~variance at the template is larger that the variance at the point given by the max-max algorithm. 

\begin{figure}[h]
\centering
\subfloat[]{\includegraphics[clip=true,trim=2.6cm 7cm 2.55cm 6cm,width=0.45\textwidth]{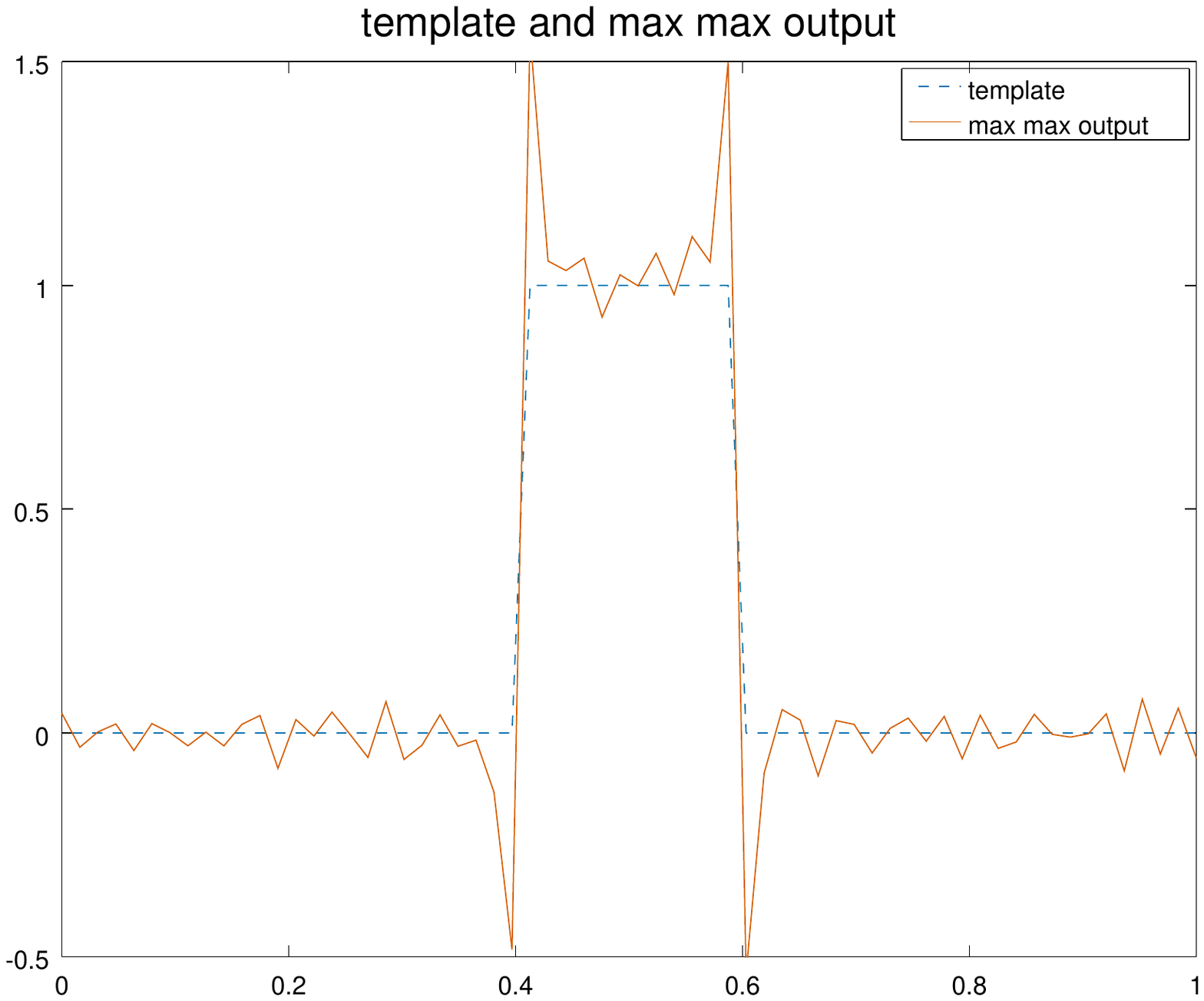} \label{fig:bias}}
\quad
\subfloat[]{\includegraphics[clip=true,trim=2.6cm 7cm 2.55cm 6cm,width=0.45\textwidth]{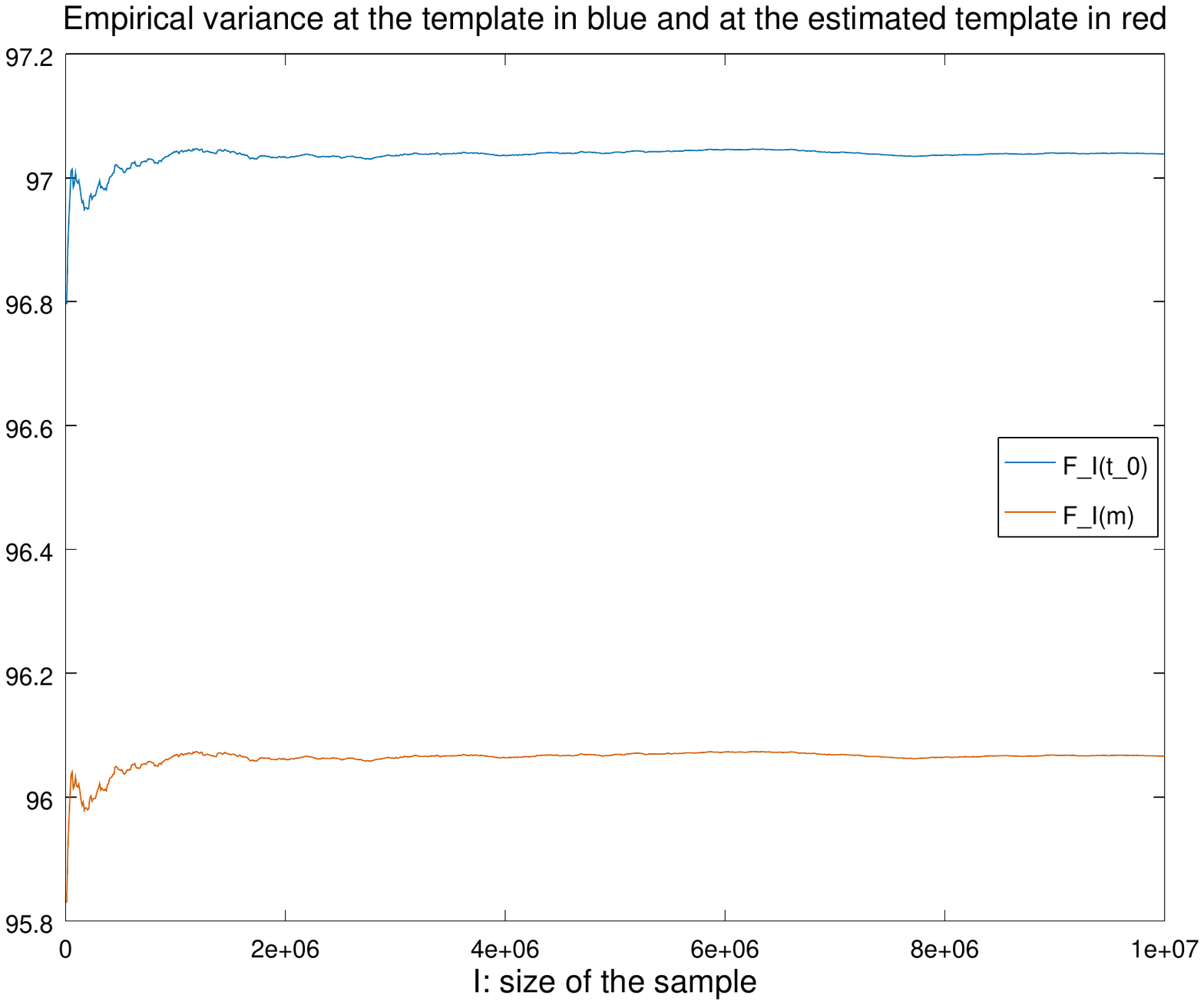}\label{fig:variance}} 
\caption[Template and estimated template and its empirical variances.]{Template $t_0$ and template estimation $\hat m$ on Figure \ref{F6}a. Empirical variance at the template and template estimation with the max-max algorithm as a function of the size of the sample on Figure \ref{F6}b. (\textbf{a}) Example of a template (a step function) and the estimated template $\hat m$ with a sample size $10^5$ in $\R^{64}$, $\epsilon$ is Gaussian noise and $\sigma=10$. At the discontinuity points of the template, we observe a Gibbs-like 
    phenomena; (\textbf{b}) Variation of $F_I(t_0)$ (in blue) and of $F_I(\hat m)$ (in red) as a function of $I$ the size of the sample. 
Since convergence is already reached, $F(\hat m)$, which is the limit of red curve, is below $F(t_0)$: $F(t_0)$ is the limit of the blue curve. Due to the inconsistency, $\hat m$ is an example of point such that~$F(\hat m)<F(t_0)$.}

\label{F6}
\end{figure}

\subsubsection{Max-Max Algorithm with a Continuous Template}

\Cref{F6}a shows that the main source of the inconsistency was the discontinuity of the template. One may think that a continuous template would lead to a better behaviour. However, it is not the case as presented on Figure \ref{fig:bias3}. Even with a large number of observations created from a continuous template we do not observe a convergence to the template. In the example of Figure \ref{fig:bias3}, the empirical bias satisfies $\frac{EB}{\sigma}=0.23$. In green we also display the mean of data knowing transformations, this produces a much better result, since that in this case we have $\frac{EB}{\sigma}=0.04$.
\begin{figure}[h]
    \centering
    \includegraphics[scale=0.4,clip=true,trim=2.6cm 7.5cm 2.5cm 6cm]{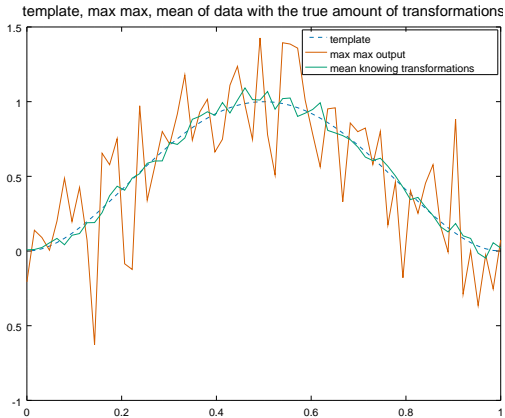}
    \caption[Continuous template and template estimation, and mean knowing transformations]{Example of an other template (here a discretization of a continuous function) and template estimation with a sample size $10^3$ in $\R^{64}$, $\epsilon$ is Gaussian noise and $\sigma=10$. Even with a continuous function the inconsistency appears.}
    \label{fig:bias3}
\end{figure}

\subsubsection{Does the Max-Max Algorithm Give Us a Global Minimum or Only a Local Minimum of the~Variance?}
\label{subsec:local}
\Cref{prop:regu} tells us that the output of the max-max algorithm is a Karcher mean of the variance, but we do not know whether it is Fréchet mean of the variance. In other words, is the output a global minimum of the variance? 
In fact, $F_I$ has a lot of local minima which are not global. To illustrate this, we may use the max-max algorithm with different starting points and we observe different outputs (which are all local minima thanks to \Cref{prop:regu}) with different empirical variance on \Cref{tab:variances}.

\begin{table}[h]
\centering
\begin{tabular}{ccccccc}

\textbf{Points} & \textbf{Template} \boldmath{$t_0$} & \boldmath{$\hat m_1$} & \boldmath{$\hat m_2$} & \boldmath{$\hat m_3$} & \boldmath{$\hat m_4$} &\boldmath{ $\hat m_5$}\\

 Empirical variance at these points& 96.714& 95.684&    95.681&   95.676&   95.677&   95.682\\
 
\end{tabular}
\caption{Empirical variances at $5$ different outputs of the max-max algorithm coming from the same sample of size $10^4$, but with different starting points. We use $\sigma=10$ and the action of time translation in $\R^{64}$. Conclusion: on these five points, only $\hat m_3$ is an eventual global minima.}
\label{tab:variances}
\end{table}

\section{Inconsistency in the Case of Non Invariant Distance under the Group Action}
\label{sec:noni}
\vspace{-6pt}
\subsection{Notation and Hypothesis}
In this Section, data still come from an Hilbert space $M$. However, we take a group of deformation $G$ which acts in a non invariant way on $M$. 
Starting from a template $t_0$ we consider a random deformation in the group $G$ namely a random variable $\Phi$ which takes value in $G$ and $\epsilon$ an standardized noise in $M$ independent of $\Phi$. We suppose that our observable random variable is:
\begin{equation*}
    Y=\Phi\cdot  t_0+\sigma\epsilon \mbox{ with } \sigma>0,\: \E(\epsilon)=0,\: \E(\|\epsilon\|^2)=1,
\end{equation*}
where $\sigma$ is the noise level. We suppose that $\E(\|Y\|^2)<+\infini$, and we define the pre-variance of $Y$ in $M/G$ as the map defined by:
\begin{equation*}
    F(m)=\E\left(\inf_{g\in G} \|g\cdot m-Y\|^2\right).
\end{equation*}

In this part we still study the inconsistency of template estimation by minimizing $F$.

We present two frameworks where we can ensure the presence of inconsistency: in \Cref{sec:subgroup} we suppose that the group $G$ contains a non trivial group $H$ which acts isometrically on $M$.
However, some groups do not satisfy this hypothesis, that is why, in \Cref{sec:linear} we do not suppose that $G$ contains a subgroup acting isometrically but we require that $G$ acts linearly on $M$. In both sections we prove inconsistency as soon as the variance $\sigma^2$ is large enough. 

These hypothesis are not unacceptable as for example, deformations that are considered in computational anatomy may include rotations which form a subgroup $H$ of the diffeomorphic deformations which acts isometrically. Concerning the second case, an important example is:

\begin{Example}
\label{exdiff}
Let $G$ be a subgroup of the group of $C^\infini$ diffeomorphisms on $\R^n$ $G$ acts linearly on $L^2(\R^n)$ with the map:
\begin{equation*}
\forall \varphi\in G\quad \forall f\in L^2(\R^n)\qquad \varphi\cdot f=f\circ\varphi^{-1}.
\end{equation*}

Note that this action is not isometric: indeed, $f\circ \varphi^{-1}$ has generally a different $L^2$-norm than $f$, because a Jacobian determinant appears in the computation of the integral.
\end{Example}

\subsection{Where Did We Need an Isometric Action Previously?}
Let $M$ be an Hilbert space, and $G$ a group acting on $M$. 
Can we define a distance in the quotient space $Q=M/G$ defined as the set which contains all the orbits? When the action is invariant, the~orbits are parallel in the sense where $d_M(m,n)=d_M(g\cdot m,g\cdot n)$ for all $m,n\in M$ and for all $g\in G$.
This~implies that:
\begin{equation*}
d_Q([m],[n])=\underset{g\in G}{\inf} \|m-g\cdot n\|,
\end{equation*}
is a distance on $Q$. However, it is not necessarily the case when the action is no longer invariant. Let us take the following example:
\begin{Example}
We call $C^\infini_{\mbox{diff}}(\R^2)$ the set of the $C^\infini$ diffeomorphisms of $\R^2$. We equip $\R^2$ with its canonical Euclidean structure. We take  $p=(-1,-1)$, $q=(1,1)$ and $r=(2,0)$ (see \cref{tio}),
\begin{equation}
G=\left\{ f\in C^\infini_{\mbox{diff}}(\R^2) \: | \: f(q)=(q),\: f(p)=(p),\: \forall x\in \R\: f(x,0)\in \R r\right\},
\end{equation}
$G$ acts on $\R^2$ by $f\cdot (x,y)=f(x,y)$. Then $q$ and $p$ are fixed points under this group action and the orbit of $r$ is the horizontal line $\{(x,0),x\in \R\}$.
On this example: 
\begin{equation*}\underset{g\in G}{\inf} \|q-g\cdot r\|=\|q-(1,0)\|=1\qquad
\mbox{ however } \qquad \underset{g\in G}{\inf} \|r-g\cdot q\|=\|r-q\|=\sqrt2,
\end{equation*}
then the function $d_Q$ is not symmetric. 
One could think define a distance by:
\begin{equation*}
\tilde d_Q([m],[n])=\underset{h,g\in G}{\inf} \|h\cdot m-g\cdot n\|.
\end{equation*}

Unfortunately, in this case $\tilde d_Q([p],[q])=\|p-q\|=2\sqrt 2$ and $\tilde d_Q([p],[r])=1=\tilde d_Q([r],[q])$ then we do not have $\tilde d_Q([p],[q])\leq \tilde d_Q([p],[r])+\tilde d_Q([r],[q])$. In other words we do not have the triangular inequality.
\begin{figure}[h]
\centering
\begin{tikzpicture}[scale=1.1]

\draw (-1,-1) node[below]{$p$} ;
\draw (1,1) node[above]{$q$} ;
\draw (2,0) node[above]{$r$} ;
\draw (-4,0) -- (3,0) ;
\draw[dotted,red, thick] (-1,0)--(-1,-1);
\draw[dotted,green, thick] (-1,-1)--(1,1);
\draw[dotted,blue,thick] (1,1)--(1,0);

\draw (-3,0) node[above]{$[r]$} ;

\draw (-1,-1) node {$\bullet$} ;
\draw (1,1) node {$\bullet$} ;
\draw (2,0) node {$\bullet$} ;

\draw[red] (-1,-0.5) node[left]{$\tilde d_Q([p],[r])=1$} ;
\draw[blue] (1,0.5) node[right]{$\tilde d_Q([q],[r])=1$} ;
\draw[green] (-0.4,-0.5) node[right]{$\tilde d_Q([p],[q])=2\sqrt2$};

\end{tikzpicture}

\caption{Example of three orbits, when $\tilde d_Q$ does not satisfy the inequality triangular.}
\label{tio} 
\end{figure}
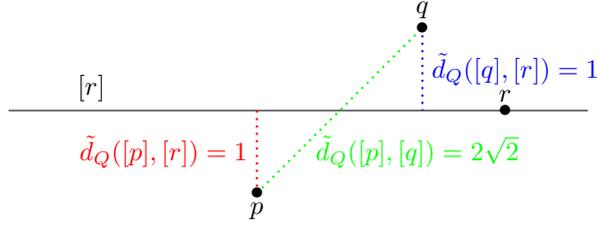
\end{Example}

Therefore when the action is no longer invariant, a priori one cannot define  a distance in the quotient anymore. If $Y$ is a random variable in $M$, $F(m)=\E(\inf_{g\in G} \|g\cdot m-Y\|^2)$ cannot be interpreted as the variance of $[Y]$.

However $\inf_{g\in G} \|g\cdot a-b\|$ is positive and is equal to zero if $a=b$, then $\inf_{g\in G}\|g\cdot a-b\|$ is a pre-distance in $M$. Then $\inf_{g\in G}\|g\cdot m-Y\|$ 
measures the discrepancy between the random point $Y$ and the current point $m$. Even if the discrepancy measure is not symmetric or does not satisfy the triangular inequality, we can still define $F(x)=\E(\inf_{g\in G} \|g\cdot x-Y\|^2)$ and call it the pre-variance of the projection of $Y$ into $M/G$, if $\E(\|Y\|^2)<+\infini$. The elements which minimize this function are the element whose orbit are the closest of the random point $Y$. Hence, we wonder if the template can be estimated by minimizing this pre-variance. Note that, once again $F(x)=F(g\cdot x)$ for all $x\in M$ and $g\in G$. Then the pre-variance is well defined in the quotient space by $[x]\mapsto F(x)$.

It is not surprising to use a discrepancy measure which is not a distance, for instance the Kullback-Leibler divergence~\cite{kul} is not symmetric although it is commonly used.

In the proof of inconsistency of \Cref{theo:Ks}, we used that the action was isometric in order to simplify the expansion of the variance in Equation~\eqref{expand}:
\begin{align*}
    F(m)&=\E\left(\inf_{g\in G} \| m-g\cdot Y\|^2\right)\\
    &=\E\left(\inf_{g\in G} \left[\|m\|^2 -\psh{m}{g\cdot Y} +\|g\cdot Y\|^2\right] \right),
\end{align*}
{with $\|g\cdot Y\|^2=\|Y\|^2$ there was only one term which depends on $g$: $\psh{g\cdot m}{Y}$ and the two other terms could be pulled out of the infimum. When the action is no longer isometric we cannot do this trick anymore.} To remedy this situation, in this article, we require that the orbit of the template is a bounded set.

{In the following, we prove inconsistency even with non isometric action (but only when the noise level is large enough if the template is not a fixed point). The sketches of the different proofs are always the same: finding a point $m$ such that $F(m)<F(t_0)$, in order to do that it suffices to find an upper bound of $F(m)$ and a lower bound of $F(t_0)$ and to compare these two bounds.}

\subsection{Non Invariant Group Action, with a Subgroup Acting Isometrically}
\label{sec:subgroup}

In this subsection $G$ acts on $M$ an Hilbert space. We assume that there exists a subgroup $H\subset G$ such that $H$ acts isometrically on $M$. As $H$ is included in $G$, we deduce a useful link between the variance of $Y$ projected in $M/H$ and the pre-variance of $Y$ projected in $M/G$:
\begin{equation*}
F(m)=\E(\underset{g\in G}{\inf} \|g\cdot m-Y\|^2)\leq \E(\underset{h\in H}{\inf} \|h\cdot m-Y\|^2)=F_H(m).
\end{equation*}

The orbit of a point $m$ under the group action $G$ is $[m]=\{g\cdot m,\: g\in G\}$, whereas the orbit of the point $m$ under the group action $H$ is $[m]_H=\{h\cdot m,\: h\in H\}$. Moreover, we call $F_H$ the variance of $[Y]_H$ in the quotient space $M/H$, and $F$ the variance of $[Y]$ in the quotient space $M/G$.

\subsubsection{Inconsistency when the Template Is a Fixed Point}
We begin by assuming that the template $t_0$ is a fixed point under the action of $G$:
\begin{Proposition}
Suppose that $t_0$ is a fixed point under the group action $G$. 
Let $\epsilon$ be a standardized noise which support is not included in the fixed points under the group action of $H$, and $Y=\Phi\cdot t_0+\sigma\epsilon=t_0+\sigma\epsilon$.
Then~$t_0$ is not a minimum of the pre-variance $F$.
\end{Proposition}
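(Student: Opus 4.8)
The plan is to exploit the domination $F(m)\le F_H(m)$ recalled at the start of this subsection, which lets me transfer the already-understood isometric theory for the subgroup $H$ to the non-invariant pre-variance $F$. Since it suffices to exhibit a single point $m^\star$ with $F(m^\star)<F(t_0)$, and since $F\le F_H$, I would reduce the statement to showing $\inf_{m\in M} F_H(m) < F(t_0)$. The value $F(t_0)$ is immediate: because $t_0$ is a fixed point under the \emph{whole} group $G$, we have $g\cdot t_0=t_0$ for every $g\in G$, so the infimum defining $F(t_0)$ collapses and $F(t_0)=\E(\|t_0-Y\|^2)=\E(\|\sigma\epsilon\|^2)=\sigma^2$; the same computation gives $F_H(t_0)=\sigma^2$.

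To show $\inf_m F_H(m)<\sigma^2$ I would use the translation trick already employed after \Cref{prop:fixedpoint}, which is legitimate here because $H$ acts isometrically (hence linearly) and fixes $t_0$. Setting $Y'=Y-t_0=\sigma\epsilon$ and $m'=m-t_0$, the identity $h\cdot m'-Y'=h\cdot m-Y$ (using $h\cdot t_0=t_0$ and linearity of $h$) shows that $F_H(m)$ equals the $H$-variance of $Y'$ evaluated at $m'$. Thus minimizing $F_H$ amounts to computing the $H$-Fréchet mean of $[Y']_H$, where the template is now the genuine fixed point $0$ and the noise is $\sigma\epsilon$. For this centered problem the half-line analysis from the proof of \Cref{theo:Ks} applies verbatim: for $v$ in the unit sphere the minimum of the $H$-variance of $Y'$ along $\R^+v$ equals $\sigma^2-\sigma^2\theta_H(v)^2$, where $\theta_H(v)=\E(\sup_{h\in H}\psh{v}{h\cdot\epsilon})\ge 0$. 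Since the support of $\epsilon$ is not contained in the fixed points of $H$, the argument proving $K>0$ in \Cref{theo:Ks} gives $K_H:=\sup_{v}\theta_H(v)>0$, so I can fix $v_0$ with $\theta_H(v_0)>0$.

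Putting this together, the point $m^\star=t_0+\sigma\theta_H(v_0)v_0$ satisfies $F_H(m^\star)=\sigma^2-\sigma^2\theta_H(v_0)^2<\sigma^2$, whence $F(m^\star)\le F_H(m^\star)<\sigma^2=F(t_0)$, so $t_0$ is not a minimum of $F$. I expect the only genuinely delicate step to be the translation reduction: one must check that subtracting the $H$-fixed point $t_0$ is compatible with the $H$-action so that the problem becomes the centered one, which works precisely because $h\cdot t_0=t_0$ and $h$ is linear (this is exactly where the isometry and linearity of $H$ enter, and why $G$ being merely non-invariant causes no trouble once we pass to $F_H$). Note that, in contrast with \Cref{theo:Ks}, no largeness of $\sigma$ is required here: the inconsistency holds for every $\sigma>0$, in agreement with \Cref{prop:fixedpoint}.
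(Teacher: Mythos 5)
Your proposal is correct and takes essentially the same route as the paper: both rest on the domination $F\le F_H$, the equality $F(t_0)=F_H(t_0)$ coming from $t_0$ being fixed by all of $G$, and the inconsistency of the isometric problem in $M/H$ when the template is a fixed point. The only difference is cosmetic: you re-derive that last ingredient explicitly (the translation $Y'=Y-t_0$ together with the half-line minimization from the proof of \Cref{theo:Ks}, producing the explicit witness $m^\star=t_0+\sigma\theta_H(v_0)v_0$), whereas the paper simply invokes \Cref{prop:fixedpoint}, whose own proof is exactly this translation trick.
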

\begin{proof}
We have:
\begin{enumerate}
\item Thanks to \Cref{prop:fixedpoint} of \Cref{sec:inc} we know that $[t_0]_H=[\E(Y)]_H$ is not the Fréchet mean of $[Y]_H$ the projection of $Y$ into $M/H$:
we can find $m\in M$ such that:
\begin{equation}
F_{H}(m)<F_{H}(t_0).
\label{inegvar1}
\end{equation}
Note that in order to apply \Cref{prop:fixedpoint}, we do not need that $\Phi$ is included in $H$, because $t_0$ is a fixed point.
\item Because we take the infimum over more elements we have:
\begin{equation}
F(m)\leq F_{H}(m).
\label{inegvar2}
\end{equation}

\item As $t_0$ is a fixed point under the action of $G$ and under the action of $H$:
\begin{equation}
F_{H}(t_0)=F(t_0)=\E(\|t_0-Y\|^2).
\label{egal3}
\end{equation}
\end{enumerate}

With Equations~\eqref{inegvar1}--\eqref{egal3}, we conclude that $t_0$ does not minimize $F$.
\end{proof}

\subsubsection{Inconsistency in the General Case for the Template}
\label{subsec:gct}
The following \Cref{prop:sc} tells us that when $\sigma$ is large enough then there is an inconsistency.

\begin{Proposition}
\label{prop:sc}
We suppose that the template is not a fixed point and that its orbit under the group $G$ is bounded. We consider $A\geq \underset{g\in G}{\sup} \frac{\|g\cdot t_0\|}{\|t_0\|}$ and $a\leq \underset{g\in G}{\inf} \frac{\|g\cdot t_0\|}{\|t_0\|}$, note that $a\leq 1\leq A$ and we have:
\begin{equation*}
\forall g\in G\quad a\|t_0\|\leq \|g\cdot t_0\|\leq A\|t_0\|.
\end{equation*}

We note:
\begin{equation*}
\theta(t_0)=\frac{1}{\|t_0\|}\E(\sup_{g\in G}\psh{g\cdot t_0}{\epsilon}) \mbox{ and } \theta_H=\frac{1}{\|t_0\|}\E\left(\underset{h\in H}{\sup} \psh{h\cdot t_0}{\epsilon}\right).
\end{equation*}

We suppose that $\theta_H>0$. If $\sigma$ is bigger than a critical noise level noted $\sigma_c$ defined as:
\begin{equation}
\sigma_c=\frac{\|t_0\|}{\theta_H}\left[\left(\frac{\theta(t_0)}{\theta_H}+A\right)+\sqrt{\left(\frac{\theta(t_0)}{\theta_H}+A\right)^2+A^2-a^2}\right].
\label{sigmac}
\end{equation}

Then we have inconsistency.
\end{Proposition}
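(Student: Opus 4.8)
The plan is to exhibit a point $m\in M$ with $F(m)<F(t_0)$, following the announced sketch: a lower bound on $F(t_0)$ and an upper bound on $F(m)$, arranged so that the common term $\E(\|Y\|^2)$ cancels upon subtraction.

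First, for the lower bound on $F(t_0)$, I would expand the squared norm and write
\begin{equation*}
\inf_{g\in G}\|g\cdot t_0-Y\|^2=\|Y\|^2-\sup_{g\in G}\left(2\psh{g\cdot t_0}{Y}-\|g\cdot t_0\|^2\right).
\end{equation*}
Using $\|g\cdot t_0\|^2\geq a^2\|t_0\|^2$ to control the subtracted square, Cauchy--Schwarz together with the bounded-orbit hypothesis to get $\psh{g\cdot t_0}{\Phi\cdot t_0}\leq A^2\|t_0\|^2$, and the independence of $\Phi$ and $\epsilon$ (so that the template part contributes nothing in expectation against $\epsilon$), I would obtain
\begin{equation*}
F(t_0)\geq \E(\|Y\|^2)-2A^2\|t_0\|^2-2\sigma\|t_0\|\,\theta(t_0)+a^2\|t_0\|^2 ,
\end{equation*}
where the noise term is recognized as $\sigma\|t_0\|\theta(t_0)=\E(\sup_{g}\psh{g\cdot t_0}{\sigma\epsilon})$.

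For the upper bound on $F(m)$, I would use the subgroup $H$ via $F(m)\leq F_H(m)$ and test the single direction $v_0=t_0/\|t_0\|$, writing $m=\mu v_0$. Since $H$ acts isometrically, $F_H(\mu v_0)=\mu^2-2\mu\,\E(\sup_{h\in H}\psh{v_0}{h\cdot Y})+\E(\|Y\|^2)$, minimized over $\mu\geq0$ at value $\E(\|Y\|^2)-(\mu^\star)^2$ with $\mu^\star=\E(\sup_h\psh{v_0}{h\cdot Y})$. The key estimate is to bound $\mu^\star$ from below: writing $h\cdot Y=(h\Phi)\cdot t_0+\sigma(h\cdot\epsilon)$ and discarding the template contribution with $\psh{v_0}{(h\Phi)\cdot t_0}\geq -A\|t_0\|$ (again the bounded orbit, since $h\Phi\in G$), and using the isometry identity $\E(\sup_h\psh{v_0}{h\cdot\epsilon})=\theta_H$, I get $\mu^\star\geq \sigma\theta_H-A\|t_0\|$. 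Once $\sigma$ is large enough that this is nonnegative, squaring yields
\begin{equation*}
F(m)\leq \E(\|Y\|^2)-(\sigma\theta_H-A\|t_0\|)^2 .
\end{equation*}

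Subtracting the two bounds cancels $\E(\|Y\|^2)$ and leaves a quadratic in $\sigma$:
\begin{equation*}
F(t_0)-F(m)\geq \theta_H^2\,\sigma^2-2\|t_0\|\big(A\theta_H+\theta(t_0)\big)\sigma-(A^2-a^2)\|t_0\|^2 .
\end{equation*}
Since $\theta_H>0$ and the constant term $-(A^2-a^2)\|t_0\|^2$ is nonpositive, this quadratic has a single nonnegative root, and a direct computation shows it equals exactly $\sigma_c$ of~\eqref{sigmac}; hence the expression is strictly positive for $\sigma>\sigma_c$, giving $F(m)<F(t_0)$ and the announced inconsistency. I expect the main obstacle to be the lower bound on $\mu^\star$: one must realize that the correct test direction is $v_0=t_0/\|t_0\|$ and that the non-isometric deformation of the template can only be controlled through the uniform bound $A$ on the orbit, while the identity $\E(\sup_h\psh{v_0}{h\cdot\epsilon})=\theta_H$ relies on the orthogonality of the $H$-action to transfer $h$ from $t_0$ onto $\epsilon$. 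The remaining work is the algebraic check that the positive root of the quadratic coincides with $\sigma_c$, together with the elementary verification that $\sigma>\sigma_c$ forces $\sigma\theta_H>A\|t_0\|$, so that the squaring step is legitimate.
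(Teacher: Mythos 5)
Your proof is correct and follows essentially the same route as the paper: your test point $\mu^\star v_0$ is exactly the paper's $\lambda_H t_0$, your two bounds (lower bound on $F(t_0)$ via $a$, Cauchy--Schwarz and $\theta(t_0)$; upper bound on $F(m)$ via $F\leq F_H$, isometry of $H$ and $\theta_H$) are the paper's bounds on $\lambda(t_0)$ and $\lambda_H$, and you arrive at the same quadratic in $\sigma$ whose largest root is $\sigma_c$. The only cosmetic difference is that the paper isolates the comparison $F(\lambda_H t_0)\leq F_H(\lambda_H t_0)<F(t_0)$ as a separate lemma (\Cref{theo:Ksg}) before plugging in the $\sigma$-dependent bounds, whereas you inline it.
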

Note that in \Cref{sec:inc} we have proved inconsistency in the isometric case as soon as \mbox{$\sigma> \frac{2\|t_0\|}{K}$}, where $K\geq \theta_H$, then we find in this theorem an analogical sufficient condition on $\sigma$ where $\left[\left(\frac{\theta(t_0)}{\theta_H}+A\right)+\sqrt{\left(\frac{\theta(t_0)}{\theta_H}+A\right)^2+A^2-a^2}\right]$ is a corrective term due to the non invariant action.

{We have shown in~\cite{dev2} that if the orbit of the template $[t_0]_H$ is a manifold, then $\theta_H>0$ as soon as the support of $\epsilon$ is not included in $T_{t_0} [t_0]^\perp$ (the normal space of the orbit of the template $t_0$ at the point $t_0$). If~$[t_0]$ is not a manifold, we have also seen in~\cite{dev2} that $\theta_H>0$ as soon as $t_0$ is an accumulation point of $[t_0]_H$ and the support of $\epsilon$ contains a ball $B(0,r)$.} Hence, $\theta_H>0$ is a rather generic condition.
Condition~\eqref{sigmac} can be reformulated as follows: as soon as the signal to noise ratio $\frac{\|t_0\|}{\sigma}$ is sufficiently small:
\begin{equation*}
\frac{\|t_0\|}{\sigma}< \frac{\theta_H}{\left(\frac{\theta(t_0)}{\theta_H}+A\right)+\sqrt{\left(\frac{\theta(t_0)}{\theta_H}+A\right)^2+A^2-a^2}},
\end{equation*}
then there is inconsistency.

We remark the presence of the constants $\theta(t_0)$ and $\theta_H$ in \Cref{prop:sc}. This kind of constants were already here in the isometric case under the form $\theta(\frac{t_0}{\|t_0\|})=\frac{1}{\|t_0\|}\E(\underset{g\in G}{\sup} \psh{t_0}{g\cdot \epsilon})$, due to the polarization identity~\eqref{polari}, we can state that it measures how much the template looks like to the noise after registration, but only in the isometric case. However we can intuit that this constant plays a analogical role in the non isometric case.

\begin{Example}
Let $G$ acting on $M$, we suppose that $G$ contains $H=O(M)$ the orthogonal group of $M$. Assume~that $G$ can modifying the norm of the template by multiplying its norm by at most $2$. Then we can set up $A=2$ and $a=0$. By aligning $\epsilon$ and $\|t_0\|$ we have $\theta_H=\E(\|\epsilon\|)>0$, and $\theta(t_0)=A\E(\|\epsilon\|)$
then when the signal to noise ratio $\frac{\|t_0\|}{\sigma}$ is smaller that $\frac{\E(\|\epsilon\|)}{4+\sqrt{20}}$ then there is inconsistency. By Cauchy-Schwarz inequality we have $\E(\|\epsilon\|)\leq \E(\|\epsilon\|^2)=1$, thus the signal to noise ratio has to be rather small in order to fulfill this condition.
\end{Example}

\subsubsection{Proof of \Cref{prop:sc}}

We define the following values:
\begin{equation*}
    \lambda_H=\frac{1}{\|t_0\|^2} \E\left(\underset{h\in H}{\sup} \psh{h\cdot t_0}{Y}\right)
\mbox{ and }
    \lambda(t_0)=\frac{1}{\|t_0\|^2} \E\left(\underset{g\in G}{\sup} \psh{g\cdot t_0}{Y}\right).
\end{equation*}

Note that $\lambda_H$ and $\lambda(t_0)$ are registration scores which definitions are the same than the registration score used in the proof of \Cref{theo:Ks} in \Cref{sec:iso} (only the normalization by $\|t_0\|$ is different). The~proof of \Cref{prop:sc} is based on the following Lemma:
\begin{Lemma}
\label{theo:Ksg}
If:
\begin{align}
\lambda_H&\geq0, \label{lambdapos}\\
a^2-2\lambda(t_0)+\lambda_H^2&>0, \label{hypconsts}
\end{align}
then $t_0$ is not a minimizer of the pre-variance of $[Y]$ in $M/G$.
\end{Lemma}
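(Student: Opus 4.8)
The plan is to follow the strategy announced in the text: produce an explicit point $m$ with $F(m)<F(t_0)$, by upper bounding $F$ at a well chosen $m$ and lower bounding $F(t_0)$. The natural candidate is a rescaling of the template along its own direction. Writing $u=\frac{t_0}{\|t_0\|}$, I would take $m_\star=\lambda_H\, t_0=\|t_0\|\lambda_H\,u$, the point where the $H$-variance restricted to the half-line $\R^+u$ is smallest.

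For the upper bound, the key observation is that the infimum defining $F$ runs over all of $G\supseteq H$, so $F(m)\le F_H(m)$ for every $m$, where $F_H(m)=\E(\underset{h\in H}{\inf}\|h\cdot m-Y\|^2)$. Since $H$ acts isometrically, $\|h\cdot m\|=\|m\|$, and expanding the squared norm gives $F_H(\mu u)=\mu^2+\E(\|Y\|^2)-2\mu\,\E(\underset{h\in H}{\sup}\psh{h\cdot u}{Y})$ for $\mu\ge 0$. By the very definition of $\lambda_H$ one has $\E(\underset{h\in H}{\sup}\psh{h\cdot u}{Y})=\|t_0\|\lambda_H$, so this is a parabola in $\mu$; using hypothesis \eqref{lambdapos} that $\lambda_H\ge 0$, its minimum over $\mu\ge0$ is attained at $\mu=\|t_0\|\lambda_H$, that is, at $m_\star$, and equals $\E(\|Y\|^2)-\|t_0\|^2\lambda_H^2$. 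Hence $F(m_\star)\le \E(\|Y\|^2)-\|t_0\|^2\lambda_H^2$.

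For the lower bound on $F(t_0)$, I would expand $\|g\cdot t_0-Y\|^2=\|g\cdot t_0\|^2-2\psh{g\cdot t_0}{Y}+\|Y\|^2$ inside the infimum. Here the isometry trick is unavailable, but one can still bound the infimum of the sum from below by the sum of the separate bounds: $\underset{g\in G}{\inf}\big(\|g\cdot t_0\|^2-2\psh{g\cdot t_0}{Y}\big)\ge a^2\|t_0\|^2-2\underset{g\in G}{\sup}\psh{g\cdot t_0}{Y}$, using $\|g\cdot t_0\|\ge a\|t_0\|$. Taking expectations and recalling $\E(\underset{g\in G}{\sup}\psh{g\cdot t_0}{Y})=\|t_0\|^2\lambda(t_0)$ yields $F(t_0)\ge \|t_0\|^2(a^2-2\lambda(t_0))+\E(\|Y\|^2)$.

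Subtracting the two estimates, the common term $\E(\|Y\|^2)$ cancels and $F(t_0)-F(m_\star)\ge \|t_0\|^2\big(a^2-2\lambda(t_0)+\lambda_H^2\big)$, which is strictly positive by hypothesis \eqref{hypconsts}; thus $t_0$ is not a minimizer of $F$. The step I expect to be the main obstacle is the lower bound on $F(t_0)$: passing the infimum inside a sum and replacing $\|g\cdot t_0\|$ by $a\|t_0\|$ discards information, so the whole argument only succeeds because the isometric subgroup $H$ supplies a matching, genuinely attained upper bound through $F_H$. The interplay of these two one-sided estimates — tight for $H$, loose for $G$ — is precisely what the hypotheses \eqref{lambdapos}--\eqref{hypconsts} are calibrated to control.
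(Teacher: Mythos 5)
Your proof is correct and follows essentially the same route as the paper's: the same candidate point $\lambda_H t_0$, the same comparison $F\le F_H$ exploiting the isometric subgroup, the same exact evaluation of $F_H(\lambda_H t_0)=\E(\|Y\|^2)-\lambda_H^2\|t_0\|^2$ (using \eqref{lambdapos}), and the same term-by-term lower bound $F(t_0)\ge \|t_0\|^2(a^2-2\lambda(t_0))+\E(\|Y\|^2)$. Your parabola-over-the-half-line framing is only a cosmetic repackaging of the paper's direct substitution.
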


How condition~\eqref{hypconsts} can be understood? In order to answer to that question, let us imagine that $G=H$ acts isometrically, then $a$ can be set up to $1$, and $\lambda(t_0)=\lambda_H$ the condition~\eqref{hypconsts} becomes $\lambda_H^2-2\lambda_H+1=(\lambda_H-1)^2>0$ and the conditions of Theorem~4.2 of~\cite{dev2} aimed to ensure that $\lambda_H>1$. Now let us return to the non invariant case: if $H$ is strictly included in $G$ such that $a$ is closed enough to $1$ and $\lambda(t_0)$ closed enough to $\lambda_H$, then on can think that condition~\eqref{hypconsts} still holds. However, the \textit{closed~enough} seems hard to be quantified. 
\begin{proof}[Proof of \Cref{theo:Ksg}]
The proof is based on the following points:
\begin{enumerate}
\item\label{FGFH} $F(\lambda_H t_0)\leq F_H(\lambda_H t_0)$,
\item\label{FGFH2} $F_H(\lambda_H t_0)< F( t_0)$.
\end{enumerate}

With \cref{FGFH,FGFH2} we get that $F(\lambda_H t_0)<F(t_0)$. \Cref{FGFH} is just based on the fact that in the map $F$, we take the infimum on a larger set than on $F_H$. We now prove \cref{FGFH2}, in order to do that we expand the two quantities, firstly:
\begin{align}
F_H(\lambda_H t_0)&=\E\left(\inf_{h\in H} \|h \cdot \lambda_H t_0\|^2+\|Y\|^2-2\psh{h\cdot \lambda_H t_0}{Y}\right) \label{iso1}\\
&= \lambda_H^2 \|t_0\|^2+\E(\|Y\|^2)-2\lambda_H \E\left(\underset{h\in H}{\sup} \psh{h\cdot t_0}{Y}\right)\label{iso2}\\
&=\E(\|Y\|^2)-\lambda_H^2\|t_0\|^2,\nonumber
\end{align}

We use the fact that $H$ acts isometrically between Equations~\eqref{iso1} and~\eqref{iso2} and the fact that $\lambda_H\geq 0$ because $\inf_{a\in A}-\lambda a=-\lambda \sup_{a\in A} a$ is true for any~$A$ subset of~$\R$ if $\lambda\geq 0$. Secondly:
\begin{align*}
F(t_0)&=\E\left(\inf_{g\in G} \|g\cdot t_0\|^2+\|Y\|^2-2\psh{g\cdot t_0}{Y}\right)\\
&\geq  a^2\|t_0\|^2+\E(\|Y\|^2)-2 \E\left(\underset{g\in G}{\sup} \psh{g\cdot t_0}{Y}\right)\\
&\geq a^2\|t_0\|^2+\E(\|Y\|^2)-2\lambda(t_0)\|t_0\|^2
\end{align*}

Then:
\begin{equation*}
F(t_0)-F_H(\lambda_H t_0)\geq \|t_0\|^2\left[a^2-2\lambda(t_0)+\lambda_H^2\right]>0,
\end{equation*}
thanks to hypothesis~\eqref{hypconsts}.
\end{proof}

\begin{proof}[Proof of \Cref{prop:sc}]
{In order to prove \Cref{prop:sc}, all we have to do is proving $\lambda_H\geq 0$ and proving that Condition~\eqref{hypconsts} is fulfilled when $\sigma>\sigma_c$}.
Firstly, thanks to Cauchy-Schwarz inequality, we have:
\begin{align*}
\lambda_H&=\frac{1}{\|t_0\|^2}\E\left(\underset{h\in H}{\sup} \psh{h\cdot t_0}{\Phi\cdot t_0+\sigma\epsilon}\right)\\
&\geq  \frac{1}{\|t_0\|^2} \left[-A \|t_0\|^2+\E(\underset{h\in H}{\sup} \psh{h\cdot t_0}{\sigma\epsilon})\right]\geq  -A+\sigma \frac{\theta_H}{\|t_0\|}
\end{align*}

Note that as $\sigma>\sigma_c\geq A \frac{\|t_0\|}{\theta_H}$ we get $\lambda_H\geq 0$, this proves~\eqref{lambdapos}. We also have:
\begin{align*}
\lambda(t_0)&= \frac{1}{\|t_0\|^2}  \E\left(\underset{g\in G}{\sup} \psh{g\cdot t_0}{\Phi\cdot t_0+\sigma\epsilon}\right)\\
&\leq \frac{1}{\|t_0\|^2} \left[A^2\|t_0\|^2+\sigma \E\left( \underset{g\in G}{\sup} \psh{g\cdot t_0}{\epsilon} \right)\right]\leq  A^2+\sigma \frac{\theta(t_0)}{\|t_0\|},
\end{align*}

Then we can find a lower bound of $a^2-2\lambda(t_0)+\lambda_H^2$:
\begin{align*}
a^2-2\lambda(t_0)+\lambda_H^2&\geq a-2\left(A^2+\sigma \frac{\theta(t_0)}{\|t_0\|}\right)+\left(\frac{\sigma \theta_H}{\|t_0\|}-A\right)^2 \\
&\geq  a^2-A^2-2\frac{\sigma\theta_H}{\|t_0\|}\left(\frac{\theta(t_0)}{\theta_H}+A\right)+\left(\frac{\sigma\theta_H}{\|t_0\|}\right)^2:=P(\sigma)
\end{align*}

For $\sigma>\sigma_c$ where $\sigma_c$ is the biggest solution of the quadratic Equation $P(\sigma)=0$, we get \mbox{$a^2-2\lambda(t_0)+\lambda_H^2>0$} and template estimation is inconsistent thanks to \Cref{theo:Ksg}. The critical $\sigma_c$ is exactly the one given by \Cref{prop:sc}.
\end{proof}

\subsection{Linear Action}
\label{sec:linear}
The result of the previous part has a drawback, it requires that the group of deformations contains a non trivial subgroup which acts isometrically. We know remove this hypothesis, but we require that the group acts linearly on data.
\subsubsection{Inconsistency}
In this Subsection we suppose that the group $G$ acts linearly on $M$. Once again, we can give a criteria on the noise level which leads to inconsistency:
\begin{Proposition}
\label{prop:lin}
We suppose that the orbit of the template is bounded with:
\begin{equation*}
\exists a\geq 0, A>0 \mbox{ such that } \forall g\in G\quad a\|t_0\|\leq \|g\cdot t_0\|\leq A\|t_0\|.
\end{equation*}

We suppose that $A<\sqrt 2$. In other words, the deformation of the template can multiply the norm of the template by less than $\sqrt 2$. We also suppose that:
\begin{equation}
\theta(t_0)=\frac{1}{\|t_0\|}\E\left(\underset{g\in G}{\sup} \psh{g\cdot t_0}{\epsilon}\right)>0.
\label{nunoniso}
\end{equation}

There is inconsistency as soon as
\begin{equation*}
\sigma\geq \sigma_c=\frac{\|t_0\|}{\theta(t_0)}\left[A^2+\frac{1+\sqrt{1-a^2(2-A^2)}}{2-A^2}\right].
\end{equation*}
\end{Proposition}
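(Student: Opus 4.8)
The plan is to exhibit a single point $m$ whose pre-variance is strictly below $F(t_0)$, following the same philosophy as the proof of \Cref{prop:sc}: upper bound $F(m)$, lower bound $F(t_0)$, and compare. The natural candidate is $m=\lambda(t_0)\,t_0$, reusing the registration score
\begin{equation*}
\lambda(t_0)=\frac{1}{\|t_0\|^2}\E\left(\underset{g\in G}{\sup}\psh{g\cdot t_0}{Y}\right)
\end{equation*}
exactly as in the proof of \Cref{prop:sc}. The first thing I would check is that $\lambda(t_0)\ge 0$ in the range of $\sigma$ of interest, since this is needed to pull the scalar out of the infimum with the correct sign when estimating $F(\lambda(t_0)t_0)$.

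For the upper bound I would use linearity, $g\cdot(\lambda t_0)=\lambda\,(g\cdot t_0)$, expand the squared norm, bound $\|g\cdot t_0\|^2\le A^2\|t_0\|^2$ uniformly, and keep the cross term as a supremum. Taking the scale to be $\lambda(t_0)$ itself (rather than the scale minimizing the resulting parabola) is what makes the argument work: it yields
\begin{equation*}
F(\lambda(t_0)t_0)\le \E(\|Y\|^2)-(2-A^2)\,\lambda(t_0)^2\|t_0\|^2,
\end{equation*}
and here the hypothesis $A<\sqrt2$ is precisely what guarantees $2-A^2>0$, so this bound genuinely sits below $\E(\|Y\|^2)$. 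Symmetrically, bounding $\|g\cdot t_0\|^2\ge a^2\|t_0\|^2$ and the cross term by its supremum gives $F(t_0)\ge \E(\|Y\|^2)+a^2\|t_0\|^2-2\lambda(t_0)\|t_0\|^2$. Subtracting, the $\E(\|Y\|^2)$ terms cancel and I am left with
\begin{equation*}
F(t_0)-F(\lambda(t_0)t_0)\ge \|t_0\|^2\left[(2-A^2)\lambda(t_0)^2-2\lambda(t_0)+a^2\right],
\end{equation*}
so inconsistency follows as soon as the bracket is strictly positive.

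The bracket is an upward parabola in $\lambda(t_0)$, positive once $\lambda(t_0)$ exceeds its larger root $\frac{1+\sqrt{1-a^2(2-A^2)}}{2-A^2}$. To turn this into a condition on $\sigma$, I would bound $\lambda(t_0)$ from below: writing $Y=\Phi\cdot t_0+\sigma\epsilon$ and using $\underset{g}{\sup}(u+v)\ge \underset{g}{\sup}v+\underset{g}{\inf}u$ together with Cauchy--Schwarz and $\|g\cdot t_0\|\le A\|t_0\|$ yields
\begin{equation*}
\lambda(t_0)\ge \frac{\sigma\,\theta(t_0)}{\|t_0\|}-A^2,
\end{equation*}
which is where hypothesis \eqref{nunoniso}, $\theta(t_0)>0$, enters: it forces this lower bound to grow linearly in $\sigma$ (and confirms $\lambda(t_0)\ge 0$ once $\sigma$ passes $A^2\|t_0\|/\theta(t_0)$). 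Requiring this lower bound to exceed the larger root rearranges exactly into $\sigma>\sigma_c$, giving the stated threshold.

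I expect the main obstacle to be organizational rather than deep: the delicate point is the choice of the scale $\lambda(t_0)$ for the test point, since it is this non-optimal choice (not the parabola-minimizing one) that manufactures the factor $2-A^2$ and makes $A<\sqrt2$ the decisive hypothesis. After that, the only care needed is the bookkeeping that converts the quadratic positivity condition on $\lambda(t_0)$, combined with the linear-in-$\sigma$ lower bound, into the closed form $\sigma_c$; in particular one must check that $\lambda(t_0)$ lands on the correct (larger-root) branch of the parabola and that the discriminant $1-a^2(2-A^2)$ is meaningful in the regime considered.
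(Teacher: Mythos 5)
Your proposal is correct and follows essentially the same route as the paper: the paper's own proof (via \Cref{theo:lin}) uses exactly the test point $\lambda(t_0)t_0$, the same upper bound $F(\lambda(t_0)t_0)\leq \E(\|Y\|^2)-(2-A^2)\lambda(t_0)^2\|t_0\|^2$ and lower bound on $F(t_0)$, the same quadratic positivity condition with larger root $\frac{1+\sqrt{1-a^2(2-A^2)}}{2-A^2}$, and the same Cauchy--Schwarz lower bound $\lambda(t_0)\geq \frac{\sigma\theta(t_0)}{\|t_0\|}-A^2$ to convert it into the threshold $\sigma_c$. Your remarks on where $A<\sqrt2$, $\theta(t_0)>0$, and $\lambda(t_0)\geq 0$ enter are all consistent with the paper's argument.
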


\begin{Example}
For instance if $A\leq 1.2$, then there is inconsistency if $\sigma\geq 7\frac{\|t_0\|}{\theta(t_0)}$.
\end{Example}
Once again we find a condition which is similar to the isometric case, but due to the non invariant action we have here a corrective term which depends on $A$ and $a$.
Note that as $G$ does not act isometrically, results in~\cite{dev2} do not apply in order to fulfill Condition~\eqref{nunoniso}. However it is easy to fulfill this Condition thanks to the following Proposition:

\begin{Proposition}
\label{prop:nug}
If $t_0$ is not a fixed point, and if the support of $\epsilon$ contains a ball $B(0,\rho)$ for $\rho>0$ then
\begin{equation*}
\theta(t_0)=\frac{1}{\|t_0\|}\E\left(\underset{g\in G}{\sup} \psh{g\cdot t_0}{\epsilon}\right)>0.
\end{equation*}
\end{Proposition}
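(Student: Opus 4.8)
The plan is to bound the supremum over all of $G$ from below by the supremum over only two well-chosen group elements, and then to exploit the centering $\E(\epsilon)=0$ together with a non-degeneracy of the noise that is forced by the ball contained in its support. The point $\frac{1}{\|t_0\|}$ makes sense only when $t_0\neq 0$, which is implicit, and I will not need the orbit of $t_0$ to be bounded for this lower bound.

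First I would use that $t_0$ is not a fixed point to pick $g_0\in G$ with $g_0\cdot t_0\neq t_0$, and set $z=t_0-g_0\cdot t_0\neq 0$. Keeping only the elements $e$ and $g_0$ in the supremum gives, pointwise in $\epsilon$,
\begin{equation*}
\underset{g\in G}{\sup}\,\psh{g\cdot t_0}{\epsilon}\ \ge\ \max\!\left(\psh{t_0}{\epsilon},\,\psh{g_0\cdot t_0}{\epsilon}\right).
\end{equation*}
Applying the identity $\max(a,b)=\tfrac{a+b}{2}+\tfrac{|a-b|}{2}$ with $a=\psh{t_0}{\epsilon}$, $b=\psh{g_0\cdot t_0}{\epsilon}$, and taking expectations, the linear part vanishes because $\E(\epsilon)=0$ (so $\E(a)=\psh{t_0}{\E(\epsilon)}=0$ and likewise $\E(b)=0$). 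Since $a-b=\psh{z}{\epsilon}$, this yields
\begin{equation*}
\E\left(\underset{g\in G}{\sup}\,\psh{g\cdot t_0}{\epsilon}\right)\ \ge\ \frac12\,\E\left(\left|\psh{z}{\epsilon}\right|\right).
\end{equation*}

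It then remains to show $\E(|\psh{z}{\epsilon}|)>0$, i.e.\ that $\psh{z}{\epsilon}$ is not almost surely zero, and this is exactly where the support hypothesis enters. Since $z\neq 0$, the point $x_0=\tfrac{\rho}{2}\,\tfrac{z}{\|z\|}$ lies in $B(0,\rho)$ and satisfies $\psh{z}{x_0}=\tfrac{\rho}{2}\|z\|>0$. By continuity of $y\mapsto\psh{z}{y}$ there is an open neighbourhood $V$ of $x_0$ and a constant $c>0$ with $\psh{z}{y}>c$ for every $y\in V$; because $x_0$ belongs to the support of $\epsilon$ we have $\P(\epsilon\in V)>0$, hence $\E(|\psh{z}{\epsilon}|)\ge c\,\P(\epsilon\in V)>0$. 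Dividing the previous display by $\|t_0\|>0$ then gives $\theta(t_0)>0$.

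The reduction to two elements and the $\max$ identity are immediate; the delicate step, and the only place any real hypothesis is used, is the last one: translating ``the support contains a ball'' into the non-degeneracy statement $\P(\psh{z}{\epsilon}\neq 0)>0$. The ball is precisely what rules out the noise being concentrated on the hyperplane $z^\perp$ — a merely centered noise with $\E(\|\epsilon\|^2)=1$ could otherwise be degenerate in the direction $z$ and make $\theta(t_0)=0$ — so I would emphasize that the support condition cannot be dropped and is the heart of the argument.
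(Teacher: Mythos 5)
Your proof is correct, and it uses the hypotheses exactly where the paper does (non-fixedness supplies $g_0$; the ball in the support plus continuity of the inner product supplies the positive-probability event; $\E(\epsilon)=0$ kills the linear terms), but the mechanism is genuinely different. The paper starts from $\E\left(\sup_{g\in G}\psh{g\cdot t_0}{\epsilon}\right)\geq\E\left(\psh{t_0}{\epsilon}\right)=0$ and makes this inequality strict: it constructs a point $x\in B(0,\rho)$ with $\psh{g_0\cdot t_0}{x}>\psh{t_0}{x}$, which is a one-sided (signed) requirement and forces a three-case analysis ($g_0\cdot t_0$ and $t_0$ linearly independent; $g_0\cdot t_0=wt_0$ with $w>1$; with $w<1$), and then argues by continuity and the support hypothesis that the event $\left\{\sup_{g\in G}\psh{g\cdot t_0}{\epsilon}>\psh{t_0}{\epsilon}\right\}$ has positive probability, so the expectation is strictly positive. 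Your symmetrization $\max(a,b)=\frac{a+b}{2}+\frac{|a-b|}{2}$ replaces that one-sided requirement by the two-sided non-degeneracy $\E\left(\left|\psh{z}{\epsilon}\right|\right)>0$ with $z=t_0-g_0\cdot t_0$, for which the single witness $x_0=\frac{\rho}{2}\frac{z}{\|z\|}$ works with no case distinction at all. This buys you a shorter argument and, in addition, an explicit quantitative bound $\theta(t_0)\geq\frac{1}{2\|t_0\|}\E\left(\left|\psh{t_0-g_0\cdot t_0}{\epsilon}\right|\right)$ (one may even take the supremum over all $g_0$ with $g_0\cdot t_0\neq t_0$), whereas the paper's conclusion is purely qualitative. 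Your reading of the implicit hypothesis is also the right one: in this section the action is linear, so $0$ is automatically a fixed point and non-fixedness of $t_0$ guarantees $t_0\neq 0$, making the division by $\|t_0\|$ legitimate.
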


\begin{Remark}
It is possible to remove the condition $A<\sqrt 2$ in \Cref{prop:lin}. Indeed Let be $h\in G$ such that:

\begin{equation*}
\frac{\underset{g\in G}{\sup} \|g\cdot t_0\|}{\|h\cdot t_0\|}<\sqrt 2.
\end{equation*}

The template $t_0$ can be replaced by $h\cdot t_0$ since $\Phi t_0+\sigma \epsilon$ is equal to $\Phi h^{-1} \cdot ht_0$ and applying \Cref{prop:lin} to the new template $h\cdot t_0$. We get that $h\cdot t_0$ does not minimize the variance $F$ with $A\leq \sqrt 2$ (because the new template is $h\cdot t_0)$. 
Since $h\cdot t_0$ does not minimize $F$, the original template $t_0$ does not minimize the pre-variance $F$ neither, since $F(t_0)=F(h\cdot t_0)$.

This changes the critical $\sigma_c$ since we apply \Cref{prop:lin} to $h\cdot t_0$ instead of $t_0$ itself.
\end{Remark}

\subsubsection{Proofs of \Cref{prop:lin} and \Cref{prop:nug}}
As in \Cref{sec:subgroup} we first prove a Lemma:
\begin{Lemma}
\label{theo:lin}
We define:
\begin{equation*}
\lambda(t_0)=\frac{1}{\|t_0\|^2}  \E\left(\underset{g\in G}{\sup} \psh{g\cdot t_0}{Y}\right).
\end{equation*}

Suppose that $\lambda(t_0)\geq 0$ and that: 
\begin{equation}
a^2-2\lambda(t_0) +\lambda(t_0)^2(2-A^2)>0.
\label{condlin}
\end{equation}

Then $t_0$ is not a minimum of $F$.
\end{Lemma}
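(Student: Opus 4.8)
The plan is to exhibit a point $m$ with $F(m)<F(t_0)$, and the natural candidate---mirroring the choice $\lambda_H t_0$ used in the proof of \Cref{theo:Ksg}---is the radial rescaling $m=\lambda(t_0)\,t_0$. Writing $\lambda=\lambda(t_0)\geq 0$, I would first bound $F(t_0)$ from below and $F(\lambda t_0)$ from above, and then compare the two bounds; the difference should reduce exactly to the left-hand side of Condition~\eqref{condlin}.

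For the lower bound I expand the squared norm inside the infimum,
\begin{equation*}
F(t_0)=\E\left(\inf_{g\in G}\left[\|g\cdot t_0\|^2-2\psh{g\cdot t_0}{Y}+\|Y\|^2\right]\right),
\end{equation*}
and use $\|g\cdot t_0\|^2\geq a^2\|t_0\|^2$ together with $\inf_g(-2\psh{g\cdot t_0}{Y})=-2\sup_g\psh{g\cdot t_0}{Y}$ to obtain
\begin{equation*}
F(t_0)\geq a^2\|t_0\|^2+\E(\|Y\|^2)-2\lambda(t_0)\|t_0\|^2.
\end{equation*}

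For the upper bound I would exploit linearity of the action, so that $g\cdot(\lambda t_0)=\lambda\,(g\cdot t_0)$ and
\begin{equation*}
F(\lambda t_0)=\E\left(\inf_{g\in G}\left[\lambda^2\|g\cdot t_0\|^2-2\lambda\psh{g\cdot t_0}{Y}+\|Y\|^2\right]\right).
\end{equation*}
Here lies the delicate point, which I expect to be the main obstacle: to turn the infimum into the supremum defining $\lambda(t_0)$, I replace $\|g\cdot t_0\|^2$ by its uniform upper bound $A^2\|t_0\|^2$ inside the bracket (valid since $\lambda^2\geq 0$) and then use $\lambda\geq 0$ to write $\inf_g(-2\lambda\psh{g\cdot t_0}{Y})=-2\lambda\sup_g\psh{g\cdot t_0}{Y}$. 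This yields
\begin{equation*}
F(\lambda t_0)\leq \lambda^2 A^2\|t_0\|^2+\E(\|Y\|^2)-2\lambda^2\|t_0\|^2,
\end{equation*}
where I also used $\E(\sup_g\psh{g\cdot t_0}{Y})=\lambda(t_0)\|t_0\|^2$. The sign hypothesis $\lambda(t_0)\geq 0$ is precisely what licenses this step, which is why it appears as an explicit assumption.

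Subtracting the two bounds, the $\E(\|Y\|^2)$ terms cancel and I am left with
\begin{equation*}
F(t_0)-F(\lambda t_0)\geq \|t_0\|^2\left[a^2-2\lambda(t_0)+\lambda(t_0)^2(2-A^2)\right]>0,
\end{equation*}
the final strict inequality being exactly Condition~\eqref{condlin}. Hence $\lambda(t_0)\,t_0$ strictly lowers the pre-variance below $F(t_0)$, so $t_0$ is not a minimizer of $F$. Beyond the upper-bound step above I expect no serious difficulty; the only things to watch are that the uniform replacement $\|g\cdot t_0\|^2\leq A^2\|t_0\|^2$ is carried out \emph{before} passing to the infimum, and that $\lambda(t_0)\geq 0$ is invoked to flip the infimum of the linear term into a supremum.
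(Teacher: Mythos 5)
Your proof is correct and follows essentially the same route as the paper: you pick the same candidate point $\lambda(t_0)\,t_0$, derive the same lower bound for $F(t_0)$ via $\|g\cdot t_0\|^2\geq a^2\|t_0\|^2$, the same upper bound for $F(\lambda(t_0)t_0)$ via linearity, the uniform bound $\|g\cdot t_0\|^2\leq A^2\|t_0\|^2$, and the sign condition $\lambda(t_0)\geq 0$, and compare them to recover Condition~\eqref{condlin}. The step you flag as delicate (replacing $\|g\cdot t_0\|^2$ by its uniform upper bound before passing to the infimum, then flipping the infimum into a supremum) is exactly what the paper does in its Equation~\eqref{lineq}, so there is no gap.
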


\begin{proof}[Proof of \Cref{theo:lin}]
Since
\begin{equation}
 \forall g\in G  \quad a\|t_0\|\leq \|g\cdot t_0\|\leq A\|t_0\|,
 \label{minmajt}
\end{equation}
then by linearity of the action we get:
\begin{equation}
\forall g\in G,\: \mu\in \R\quad a\|\mu t_0\|\leq \|g\cdot \mu t_0\|\leq A\|\mu t_0\|.
\label{minmajmut}
\end{equation}

We remind that:
\begin{equation*}
F(m)=\E\left( \underset{g\in G}{\inf} \| g\cdot m\|^2 -2\psh{g\cdot m}{Y} +\|Y\|^2 \right).
\end{equation*}

By using Equations~\eqref{minmajt} and~\eqref{minmajmut} we get:
\begin{equation*}
F(t_0)\geq a^2\|t_0\|^2-2\lambda(t_0) \|t_0\|^2+\E(\|Y\|^2),
\end{equation*}

We get:
\begin{align}
F(\lambda(t_0) t_0)&\leq \E\left( A^2\|\lambda(t_0) t_0\|^2+\|Y\|^2+\underset{g\in G}{\inf}-2\lambda(t_0)\psh{g\cdot  t_0}{Y}\right)\label{lineq}\\
 &\leq A^2\lambda(t_0)^2\|t_0\|^2 +\E(\|Y\|^2) -2\lambda(t_0)^2\|t_0\|^2.\nonumber
\end{align}

Note that we use the fact that the action is linear in Equation~\eqref{lineq}. We obtain that $t_0$ is not the minimum of the $F$:
\begin{equation*}
F(t_0)-F(\lambda(t_0) t_0)\geq \|t_0\|^2\left[a^2-2\lambda(t_0) +\lambda(t_0)^2(2-A^2)\right]>0.
\end{equation*}
\end{proof}
\begin{proof}[Proof of \Cref{prop:lin}]
By solving the following quadratic inequality we remark that:
\begin{equation*}
a^2-2\lambda(t_0)+(2-A^2)\lambda(t_0)^2>0 \mbox{ if } \lambda(t_0)>\frac{1+\sqrt{1-a^2(2-A^2)}}{2-A^2},
\end{equation*}

Besides, as in \cref{subsec:gct} we can take a lower bound of $\lambda(t_0)$ by decomposing $Y=\Phi\cdot t_0+\sigma \epsilon$ and applying Cauchy-Schwarz inequality $\psh{\Phi\cdot t_0}{g\cdot t_0}\geq -A^2\|t_0\|^2$, we get:

\begin{equation}
\lambda(t_0)\geq -A^2+\frac{\sigma}{\|t_0\|}\theta(t_0).
\label{nugl}
\end{equation}

Thanks to Condition~\eqref{nugl} and the fact that $\sigma>\sigma_c$
we get:
\begin{equation*}
\lambda(t_0)\geq -A^2+\frac{\sigma}{\|t_0\|}\theta(t_0)> \frac{1+\sqrt{1-a^2(2-A^2)}}{2-A^2}
\end{equation*}

Then $\lambda(t_0)\geq 0$ and Condition~\eqref{condlin} is fulfilled. Thus, there is inconsistency, according to \Cref{theo:lin}.
\end{proof}

\begin{proof}[Proof of \Cref{prop:nug}]
First we notice that:
\begin{equation}
\|t_0\|\theta(t_0)=\E\left(\underset{g\in G}{\sup} \psh{g\cdot t_0}{\epsilon}\right)\geq \E(\psh{t_0}{\epsilon})=\psh{t_0}{\E(\epsilon)}= 0.
\label{nugpositive}
\end{equation}

In order to have $\theta(t_0)>0$, first we show that it exists $x\in B(0,\rho)$ and $g_0\in G$ such that
\begin{equation*}
\underset{g\in G}{\sup} \psh{g\cdot t_0}{x}\geq \psh{g_0\cdot t_0}{x}>\psh{t_0}{x}.
\end{equation*}

Let $g_0\in G$ such that $g_0\cdot t_0\neq t_0$. There are three cases to be distinguished (see \cref{fig:3cases}):

\begin{enumerate}
\item The vectors $g_0\cdot t_0$ and $t_0$ are linearly independent. In this case $t_0^\perp \not\subset (g_0\cdot t_0)^\perp$, then we can find $x\in t_0^\perp$ and $x\notin(g\cdot t_0)^\perp$. Then $\psh{t_0}{x}=0$ and $\psh{g\cdot t_0}{x}\neq 0$, without loss of generality we can assume that $\psh{g\cdot t_0}{x}>0$ (replacing $x$ by $-x$ if necessary). We also can assume that $x\in B(0,\rho)$ (replacing $x$ by $\frac{x\rho}{2\|x\|}$ if necessary. Then we have $x\in B(0,\rho)$ and:
\begin{equation*}
\psh{g_0\cdot t_0}{x}>0=\psh{t_0}{x}.
\end{equation*} 
\item If $g_0\cdot t_0=w t_0$ with $w>1$, we take $x=\frac{\rho}{2\|t_0\|}t_0\in B(0,\rho)$ and we have:
\begin{equation*}
\psh{g\cdot t_0}{x}=w\frac{\rho}{2}\|t_0\|> \frac{\rho}{2}\|t_0\|=\psh{t_0}{x}.
\end{equation*}
\item If $g_0\cdot t_0=wt_0$ with $w<1$ we take $x=-\frac{\rho}{2\|t_0\|}{t_0}\in B(0,\rho)$ and we have:
\begin{equation*}
    \psh{g_0\cdot t_0}{x}=-w\frac{\rho}{2}\|t_0\|> - \frac{\rho}{2}\|t_0\|=\psh{t_0}{x}.
\end{equation*}

\end{enumerate}

In all these cases we can find $x$ such that $\psh{g_0\cdot t_0}{x}>\psh{t_0}{x}$ 
By continuity it exists $r>0$ such that for all $y$ on this ball we have $\psh{g \cdot t_0}{y}> \psh{t_0}{y}$. Then the event $\{\sup_{g\in G} \psh{g \cdot t_0}{\epsilon}> \psh{t_0}{\epsilon}\}$ has non zero probability, since $x$ is in the support of $\epsilon$ we have $\P(\epsilon\in B(x,r))>0$. Thus Inequality in~\eqref{nugpositive} will be strict. This proves that $\theta(t_0)>0$. 

\end{proof}
\vspace{-12pt}
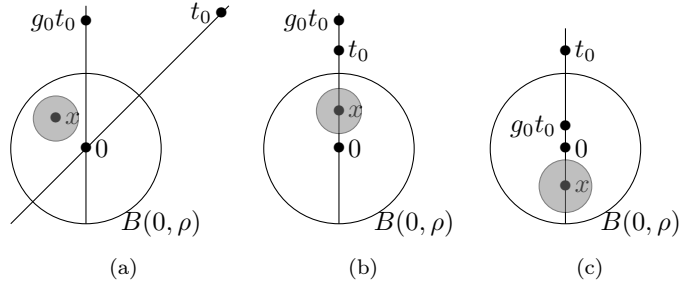
\begin{figure}[h]
\centering
\subfloat[] 
{\begin{tikzpicture}
\draw (0,0) node{$\bullet$} node[right] {$0$};
\draw (-1,-1)--(1.9,1.9);
\draw (0,-1)--(0,1.9);
\draw (0,1.7) node{$\bullet$} node[left] {$g_0 t_0$};
\draw (1.8,1.8) node{$\bullet$} node[left] {$t_0$};
\draw (-0.4,0.4) node{$\bullet$} node[right] {$x$};
\draw (1,-0.7) node[below] {$B(0,\rho)$};
\draw[fill=gray, opacity=0.5] (-0.4,0.4) circle(0.3);
\draw (0,0) circle(1);
\end{tikzpicture}}
\quad
\subfloat[] 
{\begin{tikzpicture}
\draw (0,0) node{$\bullet$} node[right] {$0$};
\draw (0,-1)--(0,1.8);
\draw (0,1.7) node{$\bullet$} node[left] {$g_0t_0$};
\draw (0,1.3) node{$\bullet$} node[right] {$t_0$};
\draw (0,0.5) node{$\bullet$} node[right] {$x$};
\draw (1,-0.7) node[below] {$B(0,\rho)$};
\draw[fill=gray, opacity=0.5] (0,0.5) circle(0.3);
\draw (0,0) circle(1);
\end{tikzpicture}}
\quad
\subfloat[] 
{\begin{tikzpicture}
\draw (0,0) node{$\bullet$} node[right] {$0$};
\draw (0,-1)--(0,1.6);
\draw (0,0.3) node{$\bullet$} node[left] {$g_0t_0$};
\draw (0,1.3) node{$\bullet$} node[right] {$t_0$};
\draw (0,-0.5) node{$\bullet$} node[right] {$x$};
\draw (1,-0.7) node[below] {$B(0,\rho)$};
\draw[fill=gray, opacity=0.5] (0,-0.5) circle(0.35);
\draw (0,0) circle(1);
\end{tikzpicture}}
\caption[Three cases in order to show that $\theta(t_0)>0$.]{Representation of the three cases, on each we can find an $x$ in the support of the noise such as $\psh{x}{g_0\cdot t_0}>\psh{x}{t_0}$ and by continuity of the dot product $\psh{\epsilon}{g_0\cdot t_0}>\psh{\epsilon}{t_0}$ with is an event with a non zero probability, (for instance the ball in gray). This is enough in order to show that $\theta(t_0)>0$. (\textbf{a})~Case 1: $t_0$ and $g\cdot t_0$ are linearly independent; (\textbf{b}) Case 2: $g\cdot t_0$ is proportional to $t_0$ with a factor $>1$;  (\textbf{c})~Case 3: $g\cdot t_0$ is proportional to $t_0$ with a factor $<1$.} 
\label{fig:3cases}
\end{figure}

\subsection{Example of a Template Estimation Which is Consistent}
In order to underline the importance of the hypotheses, we give an example where the method is~consistent:
\begin{Example}
\label{exaffine}
Let $M$ be an Hilbert space and $V$ a closed sub-linear space of $M$. Then $G=V$ acts on $M$ by (see \cref{fig:aff}):
\begin{equation*}
(v,m)\in G\times M\mapsto m+v.
\end{equation*}

This action is not isometric, indeed $m\mapsto m+v$ is not linear (except if $v=0$). However this action is invariant, let us consider $V^\perp$ the orthogonal space of $V$. The variance in the quotient space is:
\begin{equation*}
F(m)=\E\left(\underset{v\in V}{\inf} \|m+v-Y\|^2\right)=\E(\|p(m)-p(Y)\|^2)=\E(\|p(m)-p(t_0)+\epsilon\|^2),
\end{equation*}
where $p:M\to V^\perp$ the orthogonal projection on $V^\perp$. Then it is clear that $t_0$ minimizes $F$. In fact, \mbox{$s:[m]\mapsto p(m)$} is just a congruent section of the quotient (see \Cref{subsec:equisec}). Here, once again, we see the role played by the the congruent section (when it exists) in order to study the consistency.
\end{Example}

Hence, is there a contradiction with \Cref{prop:sc} or \Cref{prop:lin} which prove inconsistency as soon as the noise level is large enough? In \Cref{prop:sc}, we require that there is a subgroup acting isometrically, in this example the only element which acts linearly is the identity element $m\mapsto m+0$, then $H=\{0\}$ is the only possibility, however the support of the noise should not be included in the set of fixed point under the group action of $H$. Here, all points are fixed under $H$, hence it is not possible to fulfill this condition. \Cref{exaffine} is not a contradiction with \Cref{prop:sc}, it is also not a contradiction with \Cref{prop:lin} since it does not act linearly on data.

\begin{figure}[h]
\centering
\begin{tikzpicture}
\draw (0,-1)--(0,2);
\draw (-1,0)--(7,0);
\draw (7,0) node[below] {$V^\perp$};
\draw (0,2) node[left] {$V$};
\draw (1,1) node {$\bullet$} node[left] {$x$};
\draw (5,0.5) node {$\bullet$} node[right] {$y$};
\draw (1,0) node {$\bullet$} node[below left] {$p(x)$};
\draw (5,0) node {$\bullet$} node[below right] {$p(y)$};
\draw[thick] (5,0)--(1,0);
\draw (3,0) node[above] {$d_Q([x],[y])$};
\draw (1,-1)--(1,2);
\draw (5,-1)--(5,2);
\draw (1,2) node[left] {$[x]$};
\draw (5,2) node[right] {$[y]$};
\end{tikzpicture}
\caption[Affine translation by a subspace of $M$.]{In the case of affine translation by vectors of $V$, the orbits are affine subspace parallel to $V$. The distance between two orbits $[x]$ and $[y]$ is given by the distance between the orthogonal projection of $x$ and $y$ in $V^\perp$. This is an example where template estimation is consistent.} 
\label{fig:aff}
\end{figure}
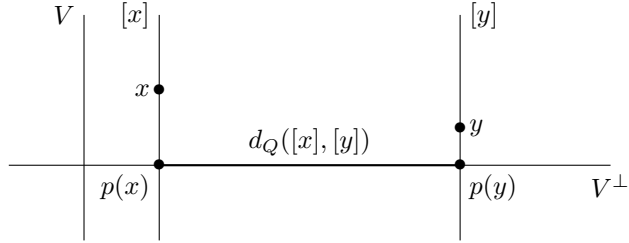

\subsection{Inconsistency with Non Invariant Action and Regularization}
In practice people add a regularization term in the function they minimize in LDDMM~\cite{mil,dur}, or~in~Demons~\cite{lom} etc.
Because, if one considers two points, one does not want necessarily to fit one with the other. Indeed, even if one deformation matches exactly these two points, it may be an unrealistic deformation.
So far, we did not study the use of such a term in the inconsistency.
\subsubsection{Case of Deformations Closed to the Identity Element of $G$}
If we suppose that the deformations $\Phi$ of the template is closed to identity, it is useless to take the infimum over $G$ because $G$ contains big deformations. 
Perhaps one of these big deformations can reaches the infimum in $F$, but this element is not the one which deformed the template in the generative model. Then such big deformations should not be taken into account. That is why, if we suppose that $G$ can be equipped with a distance $d_G$, then we can assume that there exists $r>0$ such that the deformation $\Phi$ belongs almost surely to 
\begin{equation*}
\Bc=B(e,r)=\{g\in G,\quad d_G(e,g)<r\}.
\end{equation*} 

Instead of defining $F(m)$ as $\E(\inf_{g\in G}\|g\cdot m-Y\|^2)$, one can define \mbox{$F(m)=\E(\inf_{g\in \Bc}\|g\cdot m-Y\|^2)$}, and the previous proofs will still be true, when replacing for instance $\lambda(t_0)$ by\mbox{ $\lambda(t_0)=\frac1{\|t_0\|^2}\E(\sup_{g\in \Bc} \psh{g\cdot t_0}{Y})$} etc. Likewise we need to replace the hypothesis ``the support of $\epsilon$ is not included in the set of fixed points `` by ''the support of $\epsilon$ in not included is the set of fixed points under the action restricted to $\Bc$''.

{Note that restraining ourselves to $\Bc$ is equivalent to add a following regularization on the function $F$}:
\begin{equation*}
    F(m)=\E\left(\underset{g\in G}{\inf} \|g\cdot m-Y\|^2 +Reg(g) \right)\mbox{ with } Reg(g)=\left\{ \begin{array}{ccc}
       0  & \mbox{ if} & g\in \Bc \\
       +\infini  & \mbox{ if } & g\notin \Bc
    \end{array}\right. .
\end{equation*}

Moreover considering only the elements in $\Bc$ will automatically satisfy the condition $A<\sqrt 2$ in \Cref{prop:lin} as long as the group $G$ acts continuously on the template, if $r$ is small enough.

\subsubsection{Inconsistency in the Case of a Group Acting Linearly with a Bounded Regularization}

In this Section we suppose that the group $G$ acts linearly. We also suppose that $A<\sqrt 2$. The~regularization term is a bounded map $Reg:G\to [0,\Omega]$. With this framework, we still able to prove that there is inconsistency as soon as the noise level is large enough:
\begin{Proposition}
Let $G$ be a group acting linearly on $M$. We suppose that the orbit of the template $t_0$ is bounded with $A=\underset{g\in G}{\sup} \frac{\|g\cdot t_0\|}{\|t_0\|}<\sqrt 2$, the generative model is still $Y=\Phi\cdot t_0+\sigma \epsilon$. We define the pre-variance as:
\begin{equation*}
F(m)=\E \left( \underset{g\in G}{\inf} \left( \|Y-g\cdot m\|^2+Reg(g)\right)\right).
\end{equation*}

Then as soon as the noise level is large enough, i.e.,:
\begin{equation*}
\sigma> \sigma_c=\frac{\|t_0\|}{\theta(t_0)}\left[A^2+\frac{1+\sqrt{1-(a^2+\frac{\Omega}{\|t_0\|^2})(2-A^2)}}{2-A^2}\right].
\end{equation*}

Then $t_0$ is not a minimizser of $F$.
\end{Proposition}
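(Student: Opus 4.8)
The plan is to follow the proof of \Cref{prop:lin} almost verbatim, the only new ingredient being that the bounded penalty must be carried through the two one-sided estimates. As there, I would exhibit a competitor point with strictly smaller pre-variance than $t_0$; the natural candidate is again $\lambda(t_0)\,t_0$ with $\lambda(t_0)=\frac{1}{\|t_0\|^2}\E\left(\sup_{g\in G}\psh{g\cdot t_0}{Y}\right)$, the registration score of \Cref{theo:lin}. Concretely I would first establish a regularized analogue of \Cref{theo:lin}: under $\lambda(t_0)\geq 0$ together with a quadratic inequality, one has $F(\lambda(t_0)t_0)<F(t_0)$.

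For the lower bound on $F(t_0)$ I would discard the penalty, using $Reg(g)\geq 0$ inside the infimum, so that $F(t_0)\geq \E\left(\inf_{g\in G}\|Y-g\cdot t_0\|^2\right)$ and the estimate of \Cref{theo:lin} applies unchanged, giving $F(t_0)\geq a^2\|t_0\|^2-2\lambda(t_0)\|t_0\|^2+\E(\|Y\|^2)$. For the upper bound on $F(\lambda(t_0)t_0)$ I would instead use the opposite inequality $Reg(g)\leq\Omega$ to pull the constant out of the infimum, $\inf_{g\in G}\left(\|Y-\lambda(t_0)\,g\cdot t_0\|^2+Reg(g)\right)\leq \Omega+\inf_{g\in G}\|Y-\lambda(t_0)\,g\cdot t_0\|^2$; then linearity of the action, the bound $\|g\cdot t_0\|\leq A\|t_0\|$, and $\lambda(t_0)\geq 0$ reproduce the computation of \Cref{theo:lin} and yield $F(\lambda(t_0)t_0)\leq A^2\lambda(t_0)^2\|t_0\|^2-2\lambda(t_0)^2\|t_0\|^2+\E(\|Y\|^2)+\Omega$.

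Subtracting, the terms $\E(\|Y\|^2)$ cancel and the regularization leaves a single $-\Omega$:
\begin{equation*}
F(t_0)-F(\lambda(t_0)t_0)\geq \|t_0\|^2\left[\Big(a^2-\tfrac{\Omega}{\|t_0\|^2}\Big)-2\lambda(t_0)+\lambda(t_0)^2(2-A^2)\right].
\end{equation*}
Thus the penalty simply shifts the constant $a^2$ of \Cref{theo:lin} to $a^2-\Omega/\|t_0\|^2$. Since $A<\sqrt 2$ the coefficient $2-A^2$ is positive, so this quadratic in $\lambda(t_0)$ is strictly positive once $\lambda(t_0)$ exceeds its larger root. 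Decomposing $Y=\Phi\cdot t_0+\sigma\epsilon$ and applying Cauchy--Schwarz exactly as in \Cref{prop:lin} gives $\lambda(t_0)\geq -A^2+\frac{\sigma}{\|t_0\|}\theta(t_0)$, with $\theta(t_0)>0$ guaranteed by \Cref{prop:nug}; demanding that this lower bound exceed the larger root and solving for $\sigma$ produces a critical level $\sigma_c$ of the same shape as in \Cref{prop:lin}, with $a^2$ replaced by $a^2-\Omega/\|t_0\|^2$, and simultaneously forces $\lambda(t_0)\geq 0$.

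The only genuinely new step, and hence the one to watch, is the treatment of $Reg$ under the infimum: one may exploit $0\leq Reg(g)\leq\Omega$ only one-sidedly in each estimate --- dropping it in the lower bound for $F(t_0)$ and capping it at $\Omega$ in the upper bound for $F(\lambda(t_0)t_0)$ --- and must confirm these manipulations survive the infimum of a sum, which they do because $\inf_g(u(g)+c)=c+\inf_g u(g)$ for a constant $c$ and $\inf_g(u(g)+v(g))\geq\inf_g u(g)$ when $v\geq 0$. The remaining ingredients --- boundedness of the orbit through $A<\sqrt 2$ (so that $2-A^2>0$) and positivity of $\theta(t_0)$ --- are already available, so beyond this bookkeeping the argument is a transcription of \Cref{theo:lin} and \Cref{prop:lin}.
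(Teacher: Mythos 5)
Your proof is correct, and it is exactly the paper's own argument: the paper's proof consists precisely of bounding $Reg$ below by $0$ inside the lower bound for $F(t_0)$, bounding it above by $\Omega$ inside the upper bound for $F(\lambda(t_0)t_0)$, and solving the resulting quadratic in $\lambda(t_0)$; your handling of the infimum of the sum is the only bookkeeping required, and you do it correctly.

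There is, however, one discrepancy worth flagging, and it lies in the paper's statement rather than in your proof. Your computation gives
\begin{equation*}
F(t_0)-F(\lambda(t_0)t_0)\geq \|t_0\|^2\left[\left(a^2-\tfrac{\Omega}{\|t_0\|^2}\right)-2\lambda(t_0)+\lambda(t_0)^2(2-A^2)\right],
\end{equation*}
so the critical level that actually comes out of the argument is
\begin{equation*}
\sigma_c=\frac{\|t_0\|}{\theta(t_0)}\left[A^2+\frac{1+\sqrt{1-\left(a^2-\frac{\Omega}{\|t_0\|^2}\right)(2-A^2)}}{2-A^2}\right],
\end{equation*}
with $a^2-\Omega/\|t_0\|^2$ under the square root, not $a^2+\Omega/\|t_0\|^2$ as displayed in the Proposition. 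Your sign is the correct one: the penalty can only hurt the comparison, so it lowers the constant term of the quadratic, pushes its larger root up, and therefore enlarges $\sigma_c$. This is also the only direction compatible with common sense (a larger regularization bound $\Omega$ should require a larger noise level to guarantee inconsistency), and with the paper's plus sign the square root becomes imaginary as soon as $\Omega>\|t_0\|^2\left(\frac{1}{2-A^2}-a^2\right)$, whereas with the minus sign it is always real since $a^2(2-A^2)\leq 1$. In short, you have proved the Proposition with the correct (slightly larger) threshold, and the formula printed in the statement contains a sign error. A last nitpick: $\theta(t_0)>0$ is not automatic here, since \Cref{prop:nug} requires the support of $\epsilon$ to contain a ball, a hypothesis this Proposition does not state; it is cleaner to carry $\theta(t_0)>0$ as a standing assumption, as in \Cref{prop:lin} (if $\theta(t_0)=0$ the condition $\sigma>\sigma_c$ is vacuous anyway).
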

The proof is exactly the same as the Proof of \Cref{prop:lin}, we take $0$ as a lower bound of the the regularization term in the lower bound of $F(t_0)$, and we take $\Omega$ as a upper bound of the regularization term in the upper bound of $F(\lambda(t_0) t_0)$. We solve a similar quadratic equation in order to find the critical $\sigma$.

\section{Conclusions and Discussion}
We provided an asymptotic behavior of the consistency bias when the noise level $\sigma$ tends to infinity in the case of isometric action. As a consequence, the inconsistency can not be neglected when $\sigma$ is large. When the action is no longer isometric, inconsistency has been also shown when the noise level is large.

However, we have not answered this question: can the inconsistency be neglected? When the noise level is small enough, then the consistency bias is small~\cite{mio2,dev2}, hence it can be neglected. Note~that the quotient space is not a manifold, this prevents us to use a priori the Central Limit theorem for manifold proved in~\cite{bha}. However, if the Central Limit theorem could be applied to quotient space, the fluctuations induces an error which would be approximately equal to $\frac{\sigma}{\sqrt I}$ and if $K\ll\frac{1}{\sqrt I}$, then the inconsistency could be neglected because it is small compared to fluctuation. One way to avoid the inconsistency is to use another framework, for a instance a Bayesian paradigm~\cite{che}.

In the numerical experiments we presented, we have seen that the estimated template is more crispy that the true template. The intuition is that the estimated template in computational anatomy with a group of diffeomorphisms is also more detailed. However, the true template is almost always unknown. It is then possible that one think that the computation of the template succeeded to capture small details of the template while it is just an artifact due to the inconsistency. Moreover in order to tackle this question, one needs to have a good modeliation of the noise, for instance in~\cite{kur}, the observations are curves, what is a relevant noise in the space of curves?

In this article, we have considered actions which do not let the distance invariant. Although we have only shown the inconsistency as soon as the noise level is large enough, the inequality used where not optimal at all, surely future works could improve this work and prove that inconsistency appears for small noise level. Moreover a quantification of the inconsistency should be established.

\bibliographystyle{alpha}
\bibliography{bi}

\end{document}